\newtheorem{theorem}{Theorem}
\newtheorem{cor}{Corollary}
\newtheorem*{remark*}{Remark}
\def\min{\mathop{\mathrm{min}}}
\def\ord{\text{ord}}
\let\frak\mathfrak
\let\a\alpha \let\b\beta  
\let\e\epsilon \let\f\phi \let\g\gamma \let\h\eta 
 \let\l\lambda
\let\m\mu \let\n\nu \let\o\omega \let\p\pi 
 \let\t\tau \let\th\theta
\let\D\Delta
\begin{document}
\title{{Strong Uniqueness Polynomials: the complex case}}
\author{Ta Thi Hoai An}
\address{Institute of Mathematics\\Academia
Sinica\\Nankang\\ Taipei 11529\\Taiwan, R.O.C.}
\email{tthan@math.sinica.edu.tw}
\author{Julie Tzu-Yueh Wang}
\address{Institute of Mathematics\\Academia
Sinica\\Nankang\\ Taipei 11529\\Taiwan, R.O.C.}
\email{jwang@math.sinica.edu.tw}
\author{Pit-Mann Wong}
\address{Department of Mathematics\\University of Notre Dame
\\Notre Dame\\
IN 46556\\  U.S.A.} \email{wong.2@nd.edu}
\begin{abstract}
The theory of strong uniqueness polynomials, satisfying the
separation condition (first introduced by Fujimoto \cite{Fuj1}),
for complex meromorphic functions is quite complete. We construct
examples of strong uniqueness polynomials which do not necessary
satisfy the separation condition by constructing regular 1-forms
of Wronskian type, a method introduced in \cite{AWW}. We also use
this method to produce a much easier proof in establishing the
necessary and sufficient conditions for a polynomial, satisfying
the separation condition, to be a strong uniqueness polynomials
for meromorphic functions and rational functions.
\end{abstract}
\thanks{2000\ {\it Mathematics Subject Classification.} Primary 12E05
Secondary 11S80 30D25.}

\baselineskip=16truept \maketitle

\section{introduction}

Recall that a polynomial $P$ defined over ${\bf C}$ is said to be
an {\it uniqueness polynomial} for meromorphic (respectively,
rational) functions if it satisfies the condition $P(f)=P(g)$ for
non-constant meromorphic functions $f, g$ implies that $f\equiv g$; $P$
is said to be a {\it strong uniqueness polynomial} if it satisfies the
condition
$P(f)= cP(g)$ for non-constant meromorphic (respectively, rational)
functions $f,g$ and some non-zero constant $c$ implies that $c=1$
and $f\equiv g$. A polynomial $P$ is said to {\it separate the
roots of its derivative $P'$} if $P(\a) \ne P(\b)$ for any
distinct roots $\a$ and $\b$ of $P'$. For simplicity, we shall
refer to this simply as the separation condition. A fairly
complete picture of strong uniqueness polynomials for meromorphic
functions (resp. rational functions) satisfying the separation
condition is now known due to the works of Fujimoto ([4], [5]),
and An and Wang \cite{AJ} (resp. Khoai and An \cite{KA}, and
Wang \cite{Wa}). As it turns out the separation condition though
sufficient is not necessary. The first result (see Theorem 1) of
this article is to construct examples of strong uniqueness
polynomials not satisfying the separation condition using the
method of constructing regular 1-forms of Wronskian type
introduced in \cite{AWW}.  For polynomials satisfying the
separation condition, the method of \cite{AWW} also allows us to
give a much easier proof of  the necessary and sufficient
conditions of strong uniqueness for rational functions (cf.
\cite{Wa}), and meromorphic functions (cf. \cite{AJ}). The
arguments in \cite{Wa} and \cite{AJ} using the truncated second
main theorem for rational functions and meromorphic functions are
no longer needed using the method of this article. The method also
avoids some of the rather technical arguments of quadratic
transformation used in \cite{Wa} and \cite{AJ}. The main results
are as follows.

\begin{theorem}\label{Nsep}
Let $P(X) =  a_n X^{n}+ \sum_{i=0}^m a_{i} X^{i}$ $(0 \le m < n,
a_i \in {\bf C}$ and $a_n, a_m \ne 0)$ be a polynomial of degree
$n$. Let $I = \{i \mid a_i \ne 0\}, l = \min \{i \mid i \in I\}$
and $J =\{i-l \mid i\in I\}$. Then the following statements are
valid.
\begin{enumerate}
\item[(i)] If $n - m \ge 3$ then $P$ is a strong uniqueness
polynomial for rational functions if and only if   the greatest
common divisor of the indices in $I $ is $1$ and the greatest
common divisor of the indices in $J $ is also $1$.

\item[(ii)] If $n - m \ge 4$ then $P$ is a strong uniqueness
polynomial for meromorphic functions if and only if the greatest
common divisor of the indices in $I $ is $1$ and the greatest
common divisor of the indices in $J  $ is also $1$.
\end{enumerate}
\end{theorem}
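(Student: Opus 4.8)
\emph{Strategy and necessity.} The statement is an equivalence, so I would first dispose of the two ``only if'' directions, which are the easy half. If $\gcd(I)=d>1$ then $P(X)=R(X^d)$ for a nonconstant $R$; taking a primitive $d$-th root of unity $\zeta$ and $g=\zeta f$ gives $P(g)=R((\zeta f)^d)=R(f^d)=P(f)$ with $g\ne f$, so $P$ fails already to be a uniqueness polynomial. If $\gcd(J)=e>1$ then $P(X)=X^lT(X^e)$, whence $P(\zeta X)=\zeta^lP(X)$ for $\zeta^e=1$; with $g=\zeta f$ one gets $P(f)=\zeta^{-l}P(g)$ and $g\ne f$, which violates strong uniqueness. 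Both counterexamples are rational (hence meromorphic), so necessity holds in (i) and (ii) and does not even use the gap hypothesis.

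\emph{Sufficiency: reduction to a genus bound.} Assume $\gcd(I)=\gcd(J)=1$ and suppose $P(f)=cP(g)$ for nonconstant rational (resp. meromorphic) $f,g$ and $c\ne0$. Then $\Phi=(f,g)$ is a nonconstant holomorphic map from $\PP^1$ (resp. $\CC$) into the affine plane curve
$$C_c:\quad P(x)-cP(y)=0,$$
and its image lies in one irreducible component $\Gamma$; let $\tilde\Gamma$ be the smooth projective model. In the rational case $\Phi$ lifts to a nonconstant map $\PP^1\to\tilde\Gamma$, forcing the geometric genus of $\Gamma$ to be $0$; in the meromorphic case $\Phi$ lifts to a nonconstant map $\CC\to\tilde\Gamma$, which by uniformization forces $\tilde\Gamma$ to have genus at most $1$. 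On the diagonal $\Delta:x=y$ the equation reads $(1-c)P(x)=0$, so $\Delta$ is a component only when $c=1$, and landing in it yields exactly $f\equiv g$. The plan is therefore to prove that the hypotheses force \emph{every} component of $C_c$ other than $\Delta$ to have genus $\ge1$ in case (i) and genus $\ge2$ in case (ii); this pins down $c=1$ and $f\equiv g$.

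\emph{The genus bound via regular $1$-forms.} To bound the genus from below I would construct regular $1$-forms of Wronskian type, following \cite{AWW}. Differentiating the defining equation gives $P'(x)\,dx=cP'(y)\,dy$ on $C_c$, so
$$\omega_B=\frac{B(x,y)\,dy}{P'(x)}=\frac1c\,\frac{B(x,y)\,dx}{P'(y)}$$
is a rational $1$-form on $C_c$ for every polynomial $B$. Its only possible poles lie over the common zeros of $P'(x)$ and $P'(y)$ on $C_c$ and over the points at infinity; choosing $B$ to vanish to suitable orders at the former and bounding $\deg B$ to control the latter produces holomorphic forms, and the dimension of the admissible space is a lower bound for the geometric genus. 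The two gcd conditions enter precisely here: they guarantee that the only components along which every $\omega_B$ degenerates are the linear ones $y=\zeta x$ eliminated in the necessity step, so that once these are excluded, $\gcd(I)=\gcd(J)=1$ leaves enough admissible $B$ on each remaining component.

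\emph{Main obstacle.} The crux — and the step I expect to be hardest — is the genus count for the singular, possibly reducible curve $C_c$, especially its behaviour at infinity. On the line at infinity $C_c$ degenerates to $a_nx^n=ca_ny^n$, giving $n$ distinct points, and the first correction is the degree-$m$ part; hence the singularity type at infinity, and thus the $\delta$-invariants subtracted from $\binom{n-1}{2}$, are governed exactly by the gap $n-m$ and by the exponents $l$ and $m$. Carrying out this local analysis at infinity and at the finite singular points, and verifying that after the gcd conditions remove the linear components it yields geometric genus $\ge1$ when $n-m\ge3$ and genus $\ge2$ when $n-m\ge4$, is the heart of the matter; it is here that the present method replaces the truncated second main theorem and the quadratic transformations of \cite{Wa,AJ}.
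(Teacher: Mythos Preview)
Your necessity argument is correct and matches the paper's. Your reduction of sufficiency to a genus bound on the components of $C_c$ (and of $C$ when $c=1$) is also the right framework, and constructing Wronskian-type $1$-forms $\omega_B=B\,dy/P'(x)$ is exactly the mechanism the paper uses.

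Where you diverge from the paper is in the execution of the genus bound, and your ``main obstacle'' is in fact a non-obstacle once the right forms are chosen. You propose to pick $B$ so as to kill the poles at the finite singular points (the common zeros of $P'(x)$ and $P'(y)$ on $C_c$) and then to carry out a local $\delta$-invariant analysis at infinity. The paper does neither. Instead it exploits the \emph{third} expression for the basic form $\gamma$, namely
\[
\gamma=\frac{W(X,Y)}{\partial F_c/\partial Z}
=\frac{W(X,Y)}{(n-m)a_m Z^{\,n-m-1}\bigl(X^m-cY^m+ZH_{m-1}\bigr)},
\]
and takes $B=Z^{n-m-3}(X^m-cY^m+ZH_{m-1})$ (or the same with an extra linear factor $L$ and one less power of $Z$). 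The resulting forms
\[
\theta=\frac{W(X,Y)}{(n-m)a_mZ^2},\qquad
\omega_L=\frac{L(X,Y,Z)\,W(X,Y)}{(n-m)a_mZ^3}
\]
have \emph{no poles at all} on the affine part $Z\ne0$: the finite singularities you planned to neutralise by choosing $B$ simply do not appear. At $Z=0$ one switches to the expressions $\gamma=W(Y,Z)/P'(X,Z)=W(Z,X)/(-cP'(Y,Z))$; since $P'(X,0)=nX^{n-1}$ and $P'(Y,0)=nY^{n-1}$, a pole would force $X=Y=Z=0$. Thus regularity is immediate, and the gap $n-m\ge3$ (resp.\ $\ge4$) is exactly what produces one (resp.\ three) such forms. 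No $\delta$-invariants, no local branch analysis.

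The gcd conditions then enter only to ensure these forms are \emph{non-trivial} on every component: $W(X,Y)$ vanishes identically precisely on lines $aX-bY=0$, and the paper shows that $F$ and $F_c$ have a factor of this shape if and only if $\gcd(I)>1$ or $\gcd(J)>1$. (Factors $X-aZ$ or $Y-aZ$ are excluded trivially.) Your plan to handle finite singularities by vanishing of $B$ is what the paper does in Propositions~3.3 and~4.7 for the \emph{separation-condition} theorems, where it is genuinely needed; for Theorem~\ref{Nsep} the gap hypothesis makes that machinery unnecessary.
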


In the Theorem above it is possible that $0 \in J$ and we use the
convention that 0 is divisible by all integers.

\noindent{\bf Remark 1.} If $n-m\ge 3$ and  the greatest common
divisor of the indices in $J $ is $1$ then $\#I\ge 3$. For if $\#
I=2$ then $I =\{n, m\}$. Hence $l = m$ and $J = \{n-m, 0\}$. By
our convention, the greatest common divisor of the indices in $J$
is $n - m \ge 3$. Thus $P$ cannot be a strong uniqueness
polynomial.

\noindent{\bf Remark 2.} Theorem \ref{Nsep} can be stated for more
general polynomials. Let $P(X) = X^{n}+ a_{n-1} X^{n-1} + a_{n-2}
X^{n-2} + \cdots + a_1X + a_0$ be a polynomial of degree $n$
defined over ${\bf C}$, and let $P_0(X)= P(X-\frac {a_{n-1}}n)
=X^{n}+ b_{n-2} X^{n-2} + b_{n-3} X^{n-3} + \cdots + b_1X + b_0 .$
If
\begin{align*}
a_{n-2}=\frac{n-1}{2n}a_{n-1}^2\tag {A}
\end{align*}
then $b_{n-2}=0$; if
\begin{align*}
a_{n-2}=\frac{n-1}{2n}a_{n-1}^2\quad  \text{ and}\quad
a_{n-3}=\frac{(n-1)(n-2)}{6n^2}a_{n-1}^3 \tag {B}
\end{align*}
then  $b_{n-2}=b_{n-3}=0.$ Thus $P_0(X)$ is a polynomial for which
Theorem 1 is applicable. It is clear that $P$ is a strong
uniqueness polynomial (for rational functions or meromorphic
functions) if and only if $P_0$ is a strong uniqueness polynomial.

\medskip
The following concept plays an important role in the strong
uniqueness Theorems for polynomials satisfying the separation
condition.

\medskip
\noindent{\bf Definition.} A subset $S$ of ${\bf C}$ is said to be
{\it affine rigid}  if no non-trivial affine transformation of
${\bf C}$ preserves $S$.

\begin{theorem}\label{Rational}
Let $P(X)$ be a polynomial of degree $n$ over ${\bf C}$, and
$P'(X)=\l(X-\a_1)^{m_1}\dots(X-\a_l)^{m_l}$ where $\l$ is a
nonzero constant and $\a_i \ne \a_j$ for $1 \le i \ne j \le l$.
Suppose that $P(X)$ satisfies the separation condition, i.e.,
$P(\a_i)\ne P(\a_{j})$ if $i\ne j$. Then
\begin{enumerate}
\item[(i)]
$ P(X)$ is a uniqueness polynomial  for rational functions if
and only if $l\ge 3,$ or $l=2$ and $\min\{m_1, m_2\}\ge 2$.
\item[(ii)]
$ P(X)$ is a strong uniqueness polynomial for rational
functions  if and only if  the set of zeros of
$P$ is affinely rigid and one of the following conditions is
satisfied: $(a)\; l=2, \min\{m_1,m_2\} \ge 2$, or $(b)\; l\ge 3,$
except when $n=4,\ m_1=m_2=m_3=1$ and
$$
\frac{P(\a_1)}{P(\a_2)}=\frac{P(\a_2)}{P(\a_3)}=
\frac{P(\a_3)}{P(\a_1)}=w, \qquad\text{where } w^2+w+1=0.
$$
\end{enumerate}
\end{theorem}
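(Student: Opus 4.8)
The plan is to translate the (strong) uniqueness problem into a question about rational components of explicit plane curves and then to detect positive genus by exhibiting regular $1$-forms of Wronskian type, following \cite{AWW}. For each nonzero $c$, let $\overline{C_c}$ denote the projective closure of the affine curve $C_c : P(x) - cP(y) = 0$. A pair of nonconstant rational functions $f,g$ with $P(f) = cP(g)$ is the same datum as a nonconstant morphism $\PP^1 \to \overline{C_c}$, whose image is then an irreducible component of geometric genus $0$ on which both coordinate projections are nonconstant; conversely any such component produces nonconstant $f,g$. Hence $P$ is a uniqueness polynomial for rational functions exactly when $\overline{C_1}$ has no rational component off the diagonal $\{x=y\}$, and $P$ is a strong uniqueness polynomial exactly when, in addition, $\overline{C_c}$ has no rational component at all for every $c \ne 1$. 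The first reduction I would record is the dictionary between degree-one components and symmetries: a degree-one component of some $C_c$ on which both projections are nonconstant is precisely the graph of a nontrivial affine map $T$ with $P = c\,(P\circ T)$, which forces $T$ to preserve the zero set of $P$. Affine rigidity is exactly the condition excluding all such components, and their presence contradicts strong uniqueness; this yields the necessity of affine rigidity in (ii) and isolates the remaining problem as the exclusion of rational components of degree $\ge 2$.

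To rule out higher-degree rational components I would bound the geometric genus of each relevant component from below by producing an everywhere-regular $1$-form, which forces genus $\ge 1$ and hence non-rationality. On $C_c$ one has the logarithmic identity $P'(x)\,dx/P(x) = P'(y)\,dy/P(y)$, and, writing $P' = \l\prod_i (X-\a_i)^{m_i}$, the candidate Wronskian-type forms are assembled from $dx/(x-\a_i)$ and $dy/(y-\a_j)$ together with forms vanishing to prescribed order over $x=\infty$, the residue conditions being arranged so that all poles cancel. The separation condition controls the singular locus: the affine singular points of $\overline{C_c}$ are the pairs $(\a_i,\a_j)$ with $P(\a_i)=cP(\a_j)$, and for $c=1$ separation forces $i=j$, so all of them lie on the diagonal. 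I would then verify that the numeric hypotheses—$l\ge 3$, or $l=2$ with $\min\{m_1,m_2\}\ge 2$—guarantee at least one regular form on every off-diagonal component of $\overline{C_1}$ and on every component of $\overline{C_c}$ for $c\ne 1$, giving the sufficiency half of both (i) and (ii).

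For necessity in (i) I would exhibit explicit rational parametrizations when the conditions fail: if $l=1$ then $P$ is, after an affine change, a pure power, so $(x-\a_1)=\z(y-\a_1)$ solves $P(x)=P(y)$ for each $n$-th root of unity $\z$; if $l=2$ with some $m_i=1$ a genus-zero component of the reduced curve $Q=0$ can be written down directly. For (ii) the delicate point is the borderline configuration in which, although $l\ge 3$, the genus count collapses for one special value of $c$. I would show that the only way an extra affine singular point $(\a_i,\a_j)$ with $i\ne j$ can appear on some $\overline{C_c}$, $c\ne 1$, and drop the genus to $0$ is the cyclic pattern with $n=4$, $m_1=m_2=m_3=1$, and $P(\a_1)=wP(\a_2)=w^2P(\a_3)$ where $w^2+w+1=0$; there $\overline{C_w}$ acquires three singular points, becomes rational, and furnishes an explicit counterexample, whereas in every other case the form count still yields positive genus.

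The main obstacle I anticipate is the exact genus bookkeeping in the $1$-form construction near this borderline: one must show that the regular forms of Wronskian type span a space of the correct dimension, that the only degeneration compatible with the separation condition and $l\ge 3$ is the $n=4$ cube-root-of-unity case, and that affine rigidity together with the absence of this case excludes all rational components \emph{simultaneously} for every $c\ne 1$. Treating all $c$ uniformly, rather than one curve at a time, and matching the local contributions of the singular points $(\a_i,\a_j)$ to the genus defect is where the care is needed; the rest reduces to the explicit parametrizations above and to the affine-symmetry dictionary established in the first step.
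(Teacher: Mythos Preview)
Your plan is essentially the paper's own approach: reduce (strong) uniqueness to the nonexistence of rational components on the curves $C=[F=0]$ and $C_c=[F_c=0]$, identify linear components with affine symmetries of the zero set, and detect genus $\ge 1$ on every remaining component by exhibiting a regular Wronskian-type $1$-form. The paper carries this out via Proposition~3.3 (for $C$) and Proposition~4.7 (for $C_c$, $c\ne 0,1$), and the converse via explicit genus computations with B\'ezout and the genus formula in the borderline cases $l=1$; $l=2$, $\min m_i=1$; and the cube-root exception.

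What you have written is a correct outline, but it stops short of the content that actually makes the argument work. The substance of the paper lies in the \emph{specific} forms: for $C$ one takes $\eta=W(Y,Z)/\prod_i(X-\a_iZ)^{m_i}$ and multiplies by $(X-Y)^{n-3}$, using the key local estimate $\ord_{\frak p_i,\f}(X-\a_i)=\ord_{\frak p_i,\f}(Y-\a_i)\le\ord_{\frak p_i,\f}(X-Y)$ (Lemma~3.2); for $C_c$ one introduces the partial bijection $\tau$ with $P(\a_i)=cP(\a_{\tau(i)})$ and the linear forms $L_{ij}=(Y-\a_{\tau(j)}Z)-\frac{\a_{\tau(i)}-\a_{\tau(j)}}{\a_i-\a_j}(X-\a_jZ)$, together with the order identity $(m_i+1)\ord(X-\a_iZ)=(m_{\tau(i)}+1)\ord(Y-\a_{\tau(i)}Z)$ (Lemmas~4.2--4.5). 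Your ``logarithmic identity'' and ``forms assembled from $dx/(x-\a_i)$'' are not yet these objects, and the passage from your numeric hypotheses to regularity at the singular points $(\a_i,\a_{\tau(i)},1)$ is exactly the case analysis you flag as ``the main obstacle''; in the paper this occupies most of Section~4 and is not avoidable. Likewise, for necessity when $l=2$, $\min m_i=1$, the paper does not ``write down a genus-zero component directly'' but instead proves irreducibility of $C$ via a B\'ezout intersection-number argument and then reads off genus~$0$ from the genus formula---you should expect to do something equivalent rather than an ad hoc parametrization.
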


\begin{theorem}\label{Mero}
Let $P(X)$ be a polynomial of degree $n$ over ${\bf C}$, and
$P'(X)=\l(X-\a_1)^{m_1}\dots(X-\a_l)^{m_l}$ where $\l$ is a
nonzero constant and $\a_i \ne \a_j$ for $1 \le i \ne j \le l$.
Suppose that $P(X)$ satisfies the separation condition, i.e.,
$P(\a_i)\ne P(\a_{j})$ if $i\ne j$ and that the set of zeros of
$P$ is affinely rigid. Then
\begin{enumerate}
\item[(i)]
$ P(X)$ is a uniqueness polynomial  for meromorphic functions
if and only if one of the following conditions is satisfied:
$(a)\; l\ge 3,$ except when $n=4,\ m_1=m_2=m_3=1$; or $(b)\; l=2$
and $\min\{m_1, m_2\}\ge 2$ except when $n=5,\ m_1=m_2=2$.
\item[(ii)]
$  P(X)$ is a strong uniqueness polynomial for meromorphic
functions if and only if  the set of zeros of
$P$ is affinely rigid and one of the following conditions is
satisfied: $(a)\; l\ge 3,$ except when $n=4,\ m_1=m_2=m_3=1$; or
$(b)\; l=2$ and $\min\{m_1, m_2\}\ge 2$ except when $n=5,\
m_1=m_2=2$.
\end{enumerate}

\end{theorem}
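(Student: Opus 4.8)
The plan is to translate both statements into genus conditions on the family of curves
\[
X_c = \{(x,y)\in\PP^1\times\PP^1 : P(x)=c\,P(y)\},\qquad c\in\CC^*,
\]
and to verify those conditions by the regular $1$-form construction of \cite{AWW}. The guiding principle is that a non-constant holomorphic map $\CC\to C$ into a compact Riemann surface $C$ exists exactly when $C$ has a component of geometric genus $\le 1$, since a curve of genus $\ge 2$ is hyperbolic. If $f,g$ are non-constant meromorphic functions with $P(f)=c\,P(g)$, then $(f,g)$ is a non-constant holomorphic map into some irreducible component of $X_c$; since no component of $X_c$ can be a horizontal or vertical line (that would force $P$ to be constant), both $f$ and $g$ are automatically non-constant on it. Hence $P$ is a uniqueness polynomial for meromorphic functions if and only if every component of $X_1$ other than the diagonal $\{x=y\}$ has genus $\ge 2$, and $P$ is a strong uniqueness polynomial if and only if, in addition, for every $c\ne 1$ every component of $X_c$ has genus $\ge 2$.

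First I would compute these genera by constructing regular $1$-forms of Wronskian type. On $X_c$ one has $P'(x)\,dx = c\,P'(y)\,dy$, so with $P'(X)=\l\prod_{i=1}^l(X-\a_i)^{m_i}$ the differential $dx/P'(y)$ and its admissible multiples $\f(x,y)\,dx/P'(y)$ are the natural candidates for holomorphic forms; the permissible numerators $\f$ are pinned down by the local behaviour of the curve over the critical values $P(\a_i)$, which the separation condition keeps pairwise distinct, and over the totally ramified point at infinity. Counting the independent regular forms produced this way gives a lower bound for the genus of each component, and this bound reaches $2$ precisely under the stated dichotomy: $l\ge 3$ (apart from $n=4,\ m_1=m_2=m_3=1$), or $l=2$ with $\min\{m_1,m_2\}\ge 2$ (apart from $n=5,\ m_1=m_2=2$). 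This is the same numerical split already appearing in Theorem \ref{Rational}, now with the threshold raised from genus $1$ to genus $2$; the two newly excluded configurations are exactly those in which a component is elliptic, harmless for rational functions but fatal for meromorphic ones.

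To pass from $c=1$ to $c\ne 1$ in the strong statement I would invoke affine rigidity. The diagonal is not a component of $X_c$ when $c\ne 1$, since $x=y$ on $X_c$ forces $P(x)=c\,P(x)$ and hence $c=1$. Under the numerical hypotheses the Wronskian count still bounds from below by $2$ the genus of every non-linear component of $X_c$, so the only components that could receive a non-constant map from $\CC$ are lines $x=ay+b$. But such a line means $P(ay+b)=c\,P(y)$ identically, so $y\mapsto ay+b$ is a non-trivial affine transformation carrying the zero set of $P$ onto itself, contradicting affine rigidity. For the converse I would exhibit the failures explicitly: when affine rigidity fails, the relation $P(ay+b)=c\,P(y)$ itself furnishes non-constant $f,g$ and a constant $c\ne 1$ with $P(f)=c\,P(g)$; and in the two excluded elliptic configurations the relevant non-diagonal component has geometric genus $1$, so its uniformization by $\CC$ produces non-constant $f\ne g$ with $P(f)=P(g)$.

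The main obstacle I anticipate is the borderline genus bookkeeping rather than the conceptual reduction. Two points demand care. First, $X_c$ is reducible at special parameters: for $c=1$ the diagonal always splits off, and for values of $c$ with $P(\a_i)=c\,P(\a_j)$ extra components may appear, so the $1$-form count must be carried out component by component and shown to be sharp in the two exceptional cases, where exactly one independent regular form survives and the geometric genus is therefore $1$ rather than $\ge 2$. Second, one must check that the regular forms stay holomorphic and linearly independent uniformly for all $c\ne 1$, and in particular that no elliptic component can appear beyond the lines already removed by affine rigidity; here the separation condition $P(\a_i)\ne P(\a_j)$ is precisely what keeps the critical fibres of the two projections from colliding and so stabilizes the count. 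Once these verifications are in place, combining the genus bound with hyperbolicity and the affine-rigidity exclusion of lines yields both directions of (i) and (ii).
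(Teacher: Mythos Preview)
Your proposal is correct and follows essentially the same route as the paper: reduce both directions to genus bounds on the curves $P(X)=cP(Y)$, establish sufficiency by constructing enough independent regular $1$-forms of Wronskian type (the paper does this in Propositions~3.3 and~4.7, working in $\PP^2$ rather than $\PP^1\times\PP^1$), and use affine rigidity to rule out linear components. For the converse the paper is a bit more concrete than your outline, invoking B\'ezout's theorem to verify irreducibility of the plane model and then the classical genus formula to pin down the genus as $0$ or $1$ in each of the excluded configurations $l=1$, $l=2$ with $\min\{m_1,m_2\}=1$, $(n,l)=(5,2)$ with $m_1=m_2=2$, and $(n,l)=(4,3)$.
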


For polynomials  of the special type $(X-\a)^n+a(X-\a)^m+b$, we
have the following complete characterization.
\begin{cor}\label{$X^n+X^m$}
Let $P(X)=(X-\a)^n+a(X-\a)^m+b$ be a polynomial of degree $n$ and
$1\le m\le n-1$. Then
\begin{enumerate}
\item [(i)] $P(X)$ is a uniqueness polynomial for rational
functions if and only if $n\geq 4$, $n-m\geq 2$, $\gcd(n,m)=1$ and
$a\ne 0$;
\item [(ii)] $P(X)$ is a strong uniqueness polynomial  for
rational functions if and only if  $n\geq 4$, $n-m\geq 2$,
$\gcd(n,m)=1$,  $a\ne 0$,  and $b\ne 0$;
\item  [(iii)] $P(X)$ is a uniqueness polynomial  for
 meromorphic functions if and only if $n\geq 5$, $n-m\geq
2$, $\gcd(n,m)=1$ and  $a\ne 0 $;
\item [(iv)] $P(X)$ is a strong uniqueness polynomial
for meromorphic functions if and only if $n\geq 5$, $n-m\geq 2$,
$\gcd(n,m)=1$,  $a\ne 0$, and $b\ne 0$.
\end{enumerate}
\end{cor}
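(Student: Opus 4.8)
The plan is to deduce the Corollary from Theorems~\ref{Rational} and~\ref{Mero} (and, where convenient, Theorem~\ref{Nsep}) after a normalization. First I would translate by $X\mapsto X+\a$, which alters neither the uniqueness nor the strong uniqueness property, reducing to $P(X)=X^n+aX^m+b$. I would record at once that, since $P(f)=P(g)\iff (P-b)(f)=(P-b)(g)$, the plain uniqueness property is independent of the constant $b$; this is exactly why parts (i) and (iii) omit the hypothesis $b\ne0$, whereas $P(f)=cP(g)$ is genuinely sensitive to $b$. Differentiating gives $P'(X)=X^{m-1}(nX^{n-m}+am)$, so when $a=0$ the derivative has the single root $0$ (so $l=1$) and $P$ meets no criterion of Theorems~\ref{Rational}--\ref{Mero}; hence $a\ne0$ is necessary. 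For $a\ne0$ the nonzero critical points are the $n-m$ distinct roots $\beta_j$ of $X^{n-m}=-am/n$, together with the root $0$ of multiplicity $m-1$ when $m\ge2$, so $l=n-m+1$ (resp. $l=n-1$ when $m=1$). When $n-m=1$ one gets $l=2$ with a simple critical point, so $\min m_i=1$ and Theorems~\ref{Rational}(i),~\ref{Mero}(i) fail; this forces $n-m\ge2$.

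The second step is the separation condition. Using $\beta_j^{n-m}=-am/n$ I would compute the critical values $P(\beta_j)=\tfrac{a(n-m)}{n}\beta_j^{m}+b$ and $P(0)=b$. Since $a\ne0$ the value $P(0)$ differs from each $P(\beta_j)$, while $P(\beta_j)=P(\beta_k)$ forces $\beta_j^m=\beta_k^m$, i.e. $(n-m)\mid(j-k)m$; as the $\beta_j$ run over a coset of the $(n-m)$-th roots of unity, this happens for some $j\ne k$ exactly when $\gcd(m,n-m)=\gcd(n,m)>1$. Thus separation is equivalent to $\gcd(n,m)=1$. For the necessity of $\gcd(n,m)=1$ I would give the direct obstruction: if $d=\gcd(n,m)>1$ then $P(X)=Q(X^d)$ for some $Q$, so $P(\zeta f)=P(f)$ for a primitive $d$-th root of unity $\zeta$, and $P$ is not even a uniqueness polynomial.

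The third and most delicate step is to show that the zero set of $P$ is affinely rigid if and only if $b\ne0$. When $b=0$, $P=X^m(X^{n-m}+a)$ and the rotation $X\mapsto\omega X$ by a primitive $(n-m)$-th root of unity $\omega$ preserves the zeros nontrivially (as $n-m\ge2$), so rigidity fails. For the converse I would use that $n-m\ge2$ kills the $X^{n-1}$ term, so the roots sum to zero; an affine symmetry $\phi(X)=cX+d$ permutes the roots and fixes their centroid, forcing $d=0$ and $\phi(X)=cX$. Comparing $c^nP(X/c)=X^n+ac^{n-m}X^m+bc^n$ with $P$ gives $c^{n-m}=1$ and, since $b\ne0$, $c^n=1$, whence $c^{\gcd(n,m)}=c=1$ and $\phi$ is trivial. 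I expect the main obstacle to be the case where $P$ has a repeated root, which occurs for the finitely many $b$ with some $P(\beta_j)=0$: then the centroid of the \emph{set} of roots need not vanish and the clean comparison breaks. This case must be handled via the squarefree part of $P$, or circumvented for $n-m\ge3$ (rational) and $n-m\ge4$ (meromorphic), where Theorem~\ref{Nsep} yields strong uniqueness through the $\gcd$ conditions alone: there $I=\{0,m,n\}$ and $J=\{0,m,n\}$ give $\gcd(I)=\gcd(J)=\gcd(n,m)$, so Theorem~\ref{Nsep} produces parts (ii) and (iv) (including the necessity of $b\ne0$, since $b=0$ gives $\gcd(J)=n-m>1$) without any rigidity computation.

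Finally I would assemble the four statements. With $a\ne0$, $n-m\ge2$, $\gcd(n,m)=1$ in force, separation holds, and the remaining content is to place $P$ in the correct branch and extract the degree bounds. For $m\ge2$ one has $l=n-m+1\ge3$; for $m=1$ one has $l=n-1$ with all critical points simple, so $l\ge3$ exactly when $n\ge4$, while $n=3$ (where $l=2$, $\min m_i=1$) genuinely fails, giving the bound $n\ge4$ in the rational cases (i), (ii). In the meromorphic cases (iii), (iv) the all-simple configuration with $n=4$ (that is, $m=1$, $n=4$) is precisely the excluded case of Theorem~\ref{Mero}, which sharpens the bound to $n\ge5$. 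Parts (i), (ii) then follow from Theorem~\ref{Rational} and parts (iii), (iv) from Theorem~\ref{Mero}, with rigidity supplied by $b\ne0$ (and, for the uniqueness statements, either bypassed, as Theorem~\ref{Rational}(i) needs no rigidity, or obtained by choosing any $b\ne0$ and invoking $b$-independence). It remains to verify that the exceptional configurations never intrude: $n=5$, $m_1=m_2=2$ requires $l=2$ and cannot occur once $n-m\ge2$, while the equianharmonic case $n=4$, $m_1=m_2=m_3=1$ with $P(\a_1)/P(\a_2)=P(\a_2)/P(\a_3)=P(\a_3)/P(\a_1)=w$, $w^2+w+1=0$, would force the three critical values to form an equilateral triangle centered at the origin; but by the formula $P(\beta_j)=\tfrac{a(n-m)}{n}\beta_j^{m}+b$ they are centered at $b$, so this arises only for $b=0$, which is excluded. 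This disposes of the remaining small-degree strong uniqueness cases and completes all four equivalences.
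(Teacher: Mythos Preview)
Your approach mirrors the paper's: normalize to $P(X)=X^n+aX^m+b$, verify each necessary condition by direct construction, check separation, verify affine rigidity for $b\ne0$, and invoke Theorems~\ref{Rational} and~\ref{Mero}. Two of your moves are tidier than the paper's: the observation that plain uniqueness is $b$-independent (so (i) and (iii) reduce to any convenient choice of constant term), and the centroid argument that the equianharmonic $n=4$ exception forces the three critical values to sum to $0$, hence $b=0$.

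The obstacle you flag---repeated roots spoiling the rigidity argument---is a phantom, and your partial patch via Theorem~\ref{Nsep} leaves $n-m=2$ (rational) and $n-m\in\{2,3\}$ (meromorphic) genuinely uncovered. What Theorems~\ref{Rational} and~\ref{Mero} actually require (see the Remark preceding Proposition~3.1) is that $F$ and each $F_c$ have no linear factor, i.e., that no nontrivial affine $\phi(X)=\lambda X+\mu$ satisfies $P\circ\phi=c'P$. Such a polynomial identity forces $\phi$ to permute the zeros \emph{with multiplicity}, so the relevant centroid is the multiset centroid $-(\text{coefficient of }X^{n-1})/n=0$, regardless of whether $P$ has repeated roots; your own comparison of $c^nP(X/c)$ with $P$ then goes through unchanged. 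Equivalently---and this is exactly how the paper argues---just expand $P(\lambda Y+\mu)-c'P(Y)$: since $n-m\ge2$ the $Y^{n-1}$ coefficient is $n\lambda^{n-1}\mu$, forcing $\mu=0$; the constant term $b(1-c')$ then forces $c'=1$; and $\lambda^n=\lambda^m=1$ with $\gcd(n,m)=1$ gives $\lambda=1$. This handles all $n-m\ge2$ uniformly, so the detour through Theorem~\ref{Nsep} (which the paper does not invoke for this Corollary) is unnecessary.
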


The idea, as usual, is to show that the following curves,
associated to the polynomial $P$, in ${\bf P}^2({\bf C})$ is Brody
hyperbolic (in our case this is equivalent to Kobayashi hyperbolic
because the curves under consideration are compact):
$$C = [F(X, Y, Z) = 0],\; C_c = [F_c(X, Y, Z) = 0], c \ne 0, 1$$
where $F(X, Y, Z)$ is the homogenization of the polynomial:
$$\frac{P(X) - P(Y)}{X - Y}$$
and $F_c(X, Y, Z)$ is the homogenization of the polynomial:
$$P(X) - cP(Y),\; c \ne 0, 1.$$

\noindent {\bf Remark.} The set of zeros of $P(X)$ is affinely
rigid if and only if $F(X,Y,Z)$ and each $F_c(X,Y,Z)$, $c\ne 0,\
1$, have no linear factors

 A projective
curve is Brody hyperbolic if and only if the genus of the curve is
at least 2. We also say that a projective curve $C$ is {\it
algebraically hyperbolic} if every algebraic map $f : {\bf C} \to
C$ is constant. It is well-known that a projective curve is
algebraically hyperbolic if and only if the genus of the curve is
at least 1. For general $P$ the singularities of $C$ and $C_c$ can
be complicated which makes it difficult to use the classical genus
formula. Moreover, one needs irreducibility of the curves in order
to apply the genus formula and, unfortunately irreducibility is
usually very difficult to verify even for the special type of
polynomials that we are using. For these reasons we adopt the
approach in \cite{AWW} by constructing sufficiently many explicit
non-trivial regular 1-forms of Wronskian type on these curves
under the assumptions of Theorem  1. The main advantage of using
Wronskian type 1-forms is that it is only necessary to show that
there is no linear factor (component). The reason being that a
curve is Brody (resp. algebraic) hyperbolic if and only each of
its components is Brody (resp. algebraic) hyperbolic. A regular
1-form of Wronskian is non-trivial on a component if and only if
the component is non-linear and the existence of $g$ linearly
independent regular 1-form(s) on a component implies that the
component must be of genus at least $g$. We shall introduce the
notion of regular 1-forms of Wronskian type in section 2 and show
via examples how these forms may be constructed. This procedure
will then be applied to the curves $C$ and $C_c$ in section 3.

\section{regular 1-forms of wronskian type}
\def\theequation{2.\arabic{equation}}
\setcounter{equation}{0}

In this section we deal with the practical problem of computing
the genus of a curve in ${\bf P}^2({\bf C})$. For a smooth curve
this is easily computed via the well-known genus formula $g =
(n-1)(n-2)/2$ where $n$ is the degree of the smooth curve. Note
that $(n-1)(n-2)/2$ is the number of distinct monomials of degree
$n-3$ in $z_0,\ z_1$ and $z_2$. There is also a genus formula for
irreducible singular curves in terms of the Milnor number and the
number of local branches at each of the singular point. It is
usually quite a chore in computing these invariants when the
singularity is complicated; moreover, it is usually extremely
difficult to check their irreducibility condition. On the other
hand, in Nevanlinna Theory a priori knowledge of irreducibility is
usually not necessary. The process, based on the Second Main
Theorem, will automatically break down if the curve has any
component of genus one. The reason being that, in the Second Main
Theorem there is a ramification term which comes from the
Wronskian of a map into projective space. For this reason we shall
develop a procedure of computing genus, based on the Wronskian,
without a priori knowledge of irreducibility. The main idea is as
follows. Observe that $$ \frac{\begin{vmatrix}z_i&z_j\\dz_i&dz_j
\end{vmatrix}}{z_j^2} = \frac{z_i}{z_j}
\begin{vmatrix}1&1\\\frac{dz_i}{z_i} & \frac{dz_j}{z_j}
\end{vmatrix} = d (\frac{z_i}{z_j}), \; i \ne j
$$
being the differential of a well-defined rational function is a
well-defined rational 1-form on ${\bf P}^2({\bf C})$ with
homogeneous coordinates $z_0,\ z_1$ and $z_2$. Denote
$$
W(z_i,z_j):=\begin{vmatrix}z_i&z_j\\dz_i&dz_j
\end{vmatrix} = z_i dz_j - z_j dz_i.
$$
Thus, for any rational function $\f$ on ${\bf P}^2({\bf C})$ then,
for $i \ne j$:
$$\f \frac{W(z_i, z_j)}{z_j^2},\; W(z_i, z_j) =
z_iz_j\begin{vmatrix}1&1\\\frac{dz_i}{z_i} & \frac{dz_j}{z_j}
\end{vmatrix}$$
is a well-defined rational 1-form on ${\bf P}^2({\bf C})$.
Equivalently, for any homogeneous polynomials $R$ and $S$ such
that deg $S$ = deg $R + 2$ then
\begin{align*}\frac{R}{S} W(z_i, z_j) = \f
\frac{W(z_i, z_j)}{z_j^2},\; \f = \frac{z_2^2R}{S}
\tag{2.1}\end{align*} is a well-defined rational 1-form on ${\bf
P}^2({\bf C})$.

\medskip
\noindent {\bf Definition 2.1.}\;{\it  Let $C \subset {\bf
P}^2({\bf C})$ be an algebraic curve. A $1$-form on $C$ is said to
be regular if it is the restriction (more precisely, the
pull-back) of a rational $1$-form on ${\bf P}^2({\bf C})$ such
that the pole set of $\o$ does not intersect $C$. A $1$-form is
said to be of Wronskian type if it is of the form $(2.1)$ above.}

To see how a regular $1$-form of Wronskian type may be constructed
we start by dealing with non-singular curves where the idea is
most transparent and then extend this to the singular case in the
next section.

\medskip
Let $P(Z_0, Z_1, Z_2)$ be a homogeneous polynomial of degree $n$
and let $$C = \{[z_0, z_1, z_2] \in {\bf P}^2({\bf C}) \mid P(z_0,
z_1, z_2) = 0\}.$$ Then, by Euler's Theorem, for $[z_0,z_1,z_2]\in
C$, we have
\begin{align*}
z_0\,\frac{\partial P}{\partial z_0}(z_0 ,z_1 ,z_2
)+z_1\,\frac{\partial P}{\partial z_1}(z_0 ,z_1 ,z_2
)+z_2\,\frac{\partial P}{\partial z_2}(z_0 ,z_1 ,z_2 )&=0.
\end{align*}
The (Zariski) tangent space of $C$ is defined by the equation
$P(z_0, z_1, z_2)=0$ and
$$dz_0 \,\frac{\partial P}{\partial z_0}(z_0 ,z_1 ,z_2
)+dz_1 \,\frac{\partial P}{\partial z_1}(z_0 ,z_1 ,z_2 ) +dz_2
\,\frac{\partial P}{\partial z_2}(z_0 ,z_1 ,z_2 )=0.$$ These may
be expressed as
\begin{align*}
z_0\,\frac{\partial P}{\partial z_0}(z_0 ,z_1 ,z_2
)+z_1\,\frac{\partial P}{\partial z_1}(z_0 ,z_1 ,z_2 )&= -
z_2\,\frac{\partial P}{\partial z_2}(z_0 ,z_1 ,z_2 ),\\ 
dz_0
\,\frac{\partial P}{\partial z_0}(z_0 ,z_1 ,z_2 )+dz_1
\,\frac{\partial P}{\partial z_1}(z_0 ,z_1 ,z_2 ) &= - dz_2
\,\frac{\partial P}{\partial z_2}(z_0 ,z_1 ,z_2 ).
\end{align*}
 Then by Cramer's rule, we have, on $C$
$$\frac{\partial
P}{\partial z_0} = \frac{W(z_1, z_2)}{W(z_0, z_1)} \frac{\partial
P}{\partial z_2}, \; \frac{\partial P}{\partial z_1} =
\frac{W(z_2, z_0)}{W(z_0, z_1)} \frac{\partial P}{\partial z_2}$$
provided that $W(z_0, z_1) \not\equiv 0$ on any component of $C$,
i.e., the defining homogeneous polynomial of $C$ has no linear
factor of the form $az_0+bz_1$. Thus
\begin{align*}\frac{W(z_1, z_2)}{\frac{\partial P}{\partial
z_0}(z_0 ,z_1 ,z_2)} = \frac{W(z_2, z_0)}{\frac{\partial
P}{\partial z_1}(z_0 ,z_1 ,z_2)} = \frac{W(z_0,
z_1)}{\frac{\partial P}{\partial z_2}(z_0 ,z_1 ,z_2)} \tag{2.2}
\end{align*}
is a globally well-defined rational 1-form  on any component of
$\pi^{-1}(C) \subset {\bf C}^3 \setminus \{0\}$ where ($\pi : {\bf
C}^3 \setminus \{0\} \to {\bf P}^2({\bf C})$ is the Hopf
fibration); {\it provided that, of course, the expressions make
sense, i.e. the denominators are not identically zero when
restrict to a component of $\p^{-1}(C)$}. For our purpose, we also
require that the form given by (2.2)  {\it  is not identically
trivial when restrict to a component of $\p^{-1}(C)$.} This is equivalent
to the condition that {\it the Wronskians in the formula above are
not identically zero, i.e., the defining homogeneous polynomial of
$C$ has no linear factor of the form $az_i+bz_j$ where $a,b\in
{\bf C}$, $0\le i,j\le 2$ and $i\ne j$.}   If $$[P = 0] \cap
[\frac{\partial P}{\partial z_0} = 0] \cap [\frac{\partial
P}{\partial z_1} = 0] \cap [\frac{\partial P}{\partial z_2} = 0] =
\emptyset$$ (i.e., $C$ is smooth) then, at any point, one of the
expression in (2.2) is regular at the point, hence so are the
other expressions. This means that
\begin{align*}\eta = \frac{\begin{vmatrix}z_1&z_2\\dz_1&dz_2
\end{vmatrix}}{\frac{\partial
P}{\partial z_0}} \tag{2.3}\end{align*} is regular on
$\pi^{-1}(C)$ (note that the form $\h$ is not well-defined on $C$
unless $n = 3$, see (2.1)). The form ($n = \deg P$)
\begin{align*}\omega =
\frac{\begin{vmatrix}z_1&z_2\\dz_1&dz_2
\end{vmatrix}}{z_0^2} \frac{z_0^{n-1}}{\frac{\partial
P}{\partial z_0}} = \frac{\begin{vmatrix}z_1&z_2\\dz_1&dz_2
\end{vmatrix}}{\frac{\partial
P}{\partial z_0}} z_0^{n-3} = z_0^{n-3} \eta\tag{2.4}
\end{align*}
is a well-defined (again by (2.1)) rational 1-form on $C$.
Moreover, as $\eta$ is regular on $C$, the 1-form $\omega$  is
also regular if $n \ge 3$. If $n = 3$ then $\omega = \eta$ and if
$n \ge 4$ then $\omega$ is regular and vanishes along the ample
divisor $[z_0^{n-3} = 0] \cap C$. Thus for any homogeneous
polynomial $Q = Q(z_0, z_1, z_2)$ of degree $n - 3$, the 1-form
$$\frac{Q}{z_0^{n-3}} \omega = Q \eta$$ is regular on
$C$ and vanishes along $[Q = 0]$. Note that the dimension of the
vector space of homogeneous polynomials of degree $n - 3$ (a basis
is given by all possible monomials) is
$$\frac{(n - 1)(n - 2)}{2} = \mbox{ genus of } C.$$
We summarize these in the following Proposition:

\medskip
\noindent {\bf Proposition 2.2.}\;{\it  Let $C = \{[z_0, z_1, z_2]
\in {\bf P}^2({\bf C})\,|\,P(z_0, z_1, z_2) = 0\}$ be a
non-singular curve of degree $n \ge 3$. If $n = 3$ then the space
of regular $1$-forms on $C$ is  $\{c \eta\,|\, c \in {\bf C}\}$
where $\eta$ is defined by $(2.2)$. If $n \ge 4$ let $$\{Q_i\,|\,
Q_i \text{ is a monomial of degree } n - 3, 1 \le i \le (n - 1)(n
- 2)/2\}$$ be a basis of homogeneous polynomials of degree $n - 3$
then
$$\{\omega_i = Q_i \eta \,|\, 1 \le i \le
\frac{(n - 1)(n - 2)}{2}\}$$ is a basis of the space of regular
$1$-forms on $C$.}

Next we extend the construction to some examples of singular
curves.

\medskip
\noindent {\bf Example 1.}\,\,Let $P_{m,n}(z_0, z_1, z_2) =
z_0^{n} + z_1^{m}z_2^{n-m} + z_2^{n} = 0, n \ge m \ge 1$. If $n =
m$ then the curve $C = [P_{m,n}(z_0, z_1, z_2) = 0]$ is
non-singular and so by Proposition 2.2, if $n = 3$ then all
holomorphic 1-forms are constant multiples of
$$\eta = \frac{\begin{vmatrix}z_1&z_2\\dz_1&dz_2
\end{vmatrix}}{z_0^{2}}.$$
If $n = 4$ then
$$\{\frac{z_0 \begin{vmatrix}z_1&z_2\\dz_1&dz_2
\end{vmatrix}}{z_0^{2}},
\frac{z_1 \begin{vmatrix}z_1&z_2\\dz_1&dz_2
\end{vmatrix}}{z_0^{2}},
\frac{z_2 \begin{vmatrix}z_1&z_2\\dz_1&dz_2
\end{vmatrix}}{z_0^{2}}\}$$
is a basis of holomorphic 1-forms on $C_{4,4}$.

Consider now the case $n > m \ge 1$ then
\begin{align*}&\partial P_{m,n}/\partial z_0 = nz_0^{n-1} = 0,\\
&\partial P_{m,n}/\partial z_1 = mz_1^{m-1}z_2^{n-m} = 0,\\&
\partial P_{m,n}/\partial z_2 = (m-n) z_1^{m}z_2^{n-m-1} +
nz_2^{n-1} = 0.\end{align*} If $m + 1 = n$ the curve $C_{m,n} =
\{[z_0, z_1, z_2] \in {\bf P}^{2}({\bf C}) \,|\,P_{m,n}(z_0, z_1,
z_2) = 0\}$ is still smooth. If $n = m + 2 \ge 3$ then $C_{m,n}$
is singular with a unique singular point at $[0, 1, 0]$.
Proposition 2.2 does not apply to singular curves but the
procedure of the construction of holomorphic forms can be modified
as follows. The identities (2.2) is now of the form:
$$\frac{W(z_1, z_2)}{nz_0^{n-1}} =
\frac{W(z_2, z_0)}{(n-2)z_1^{n-3}z_2^{2}} = \frac{W(z_0,
z_1)}{z_2(2 z_1^{n-2} + nz_2^{n-2})}$$ where the denominators now
have common zero. Instead of taking
$$\eta = \frac{\begin{vmatrix}z_2&z_0\\dz_2&dz_0
\end{vmatrix}}{\frac{\partial P}{\partial z_1}}
= \frac{\begin{vmatrix}z_2&z_0\\dz_2&dz_0
\end{vmatrix}}{z_1^{n-3}z_2^2}$$
(as in the smooth case) which is not regular, we take
$$\eta = \frac{\begin{vmatrix}z_2&z_0\\dz_2&dz_0
\end{vmatrix}}{z_1^{n-3}z_2}
= (n-2)\frac{\begin{vmatrix}z_0&z_1\\dz_0&dz_1
\end{vmatrix}}{2z_1^{n-2} + nz_2^{n-2}}$$
which is regular on $\pi^{-1}(C_{m,n})$ because the common zero of
the denominators are given by the equation $[z_1 = z_2 = 0]$,
i.e., the point $[1, 0, 0]$ which is not on $C_{m,n}$. Hence
$$\omega =
\frac{\begin{vmatrix}z_2&z_0\\dz_2&dz_0
\end{vmatrix}}{z_0^{2}}
\frac{z_0^{2}z_0^{n-4}z_2}{z_1^{n-3}z_2^{2}} =
\frac{\begin{vmatrix}z_2&z_0\\dz_2&dz_0
\end{vmatrix}}{z_1^{n-3}z_2} z_0^{n-4} = z_0^{n-4}
\eta$$ is globally well-defined on $C_{m,n}$, regular and
vanishing along $(n - 4)[z_0 = 0]$. This implies that if $n = 4$
then $\omega = \eta$ is a global regular 1-form on $C_{m,4}$. If
$n = 5$ then $\omega = z_0 \eta$ is globally regular and vanishes
along the ample divisor $[z_0 = 0]$. Indeed we see that
$$\{\omega = \frac{z_0}{z_1^{2}z_2}
\begin{vmatrix}z_2&z_0\\dz_2&dz_0
\end{vmatrix},
\frac{z_1}{z_0} \omega = \frac{z_1}{z_1^2 z_2}
\begin{vmatrix}z_2&z_0\\dz_2&dz_0
\end{vmatrix},
\frac{z_2}{z_0} \omega = \frac{z_2}{z_1^{2}z_2}
\begin{vmatrix}z_2&z_0\\dz_2&dz_0
\end{vmatrix}\}$$
are linearly independent holomorphic 1-forms on $C_{3,5}$ hence
the genus of $C_{3,5}$ is $\ge 3 = \frac{(5-1)(5-2)}{2} - 3$.

\medskip
More generally if $n = m + k, k \ge 3$ then
$$
\frac{W(z_1, z_2)}{nz_0^{n-1}} = \frac{W(z_2,
z_0)}{(n-k)z_1^{n-k-1}z_2^{k}} = \frac{W(z_0, z_1)}{z_2^{k-1} (k
z_1^{n-k} + nz_2^{n-k})}$$ and
$$\eta = \frac{\begin{vmatrix}z_2&z_0\\dz_2&dz_0
\end{vmatrix}}{z_1^{n-k-1}z_2}$$
is regular on $\pi^{-1}(C_{m,n})$ hence
$$\omega =
\frac{\begin{vmatrix}z_2&z_0\\dz_2&dz_0
\end{vmatrix}}{z_0^{2}} \frac{z_0^{2}z_0^{n-k}}{z_1^{n-k-1}z_2} =
\frac{\begin{vmatrix}z_2&z_0\\dz_2&dz_0
\end{vmatrix}}{z_1^{n-k-1}z_2} z_0^{n-k-2} = z_0^{n-k-2}
\eta = z_0^{m-2} \eta
$$
is globally well-defined on $C_{m,n}$, regular and vanishing along
$(n - k - 2)[z_0 = 0]$ if $m = n - k \ge 2$. Let $\{Q_1, ...,
Q_{m(m-1)/2}\}$ be a basis of monomials of degree $m-2$ in $\{z_0,
z_1, z_2\}$ then
$$
\{Q_i \eta \,|\, i = 1, ..., m(m-1)/2\}
$$
are linearly independent global regular 1-forms on $C_{m,m+k}, m
\ge 2, k \ge 2$. Thus the genus of $C_{m,m+k} \ge m(m - 1)/2$ for
all $m \ge 2, k \ge 2$.

\medskip
The procedure of this section will be applied in the next section
to deal with the situation of uniqueness polynomials.

\section{The case $P(X)=P(Y)$}

Let $C$ be a plane curve (not necessarily irreducible) defined by
a homogeneous polynomial $R(X,Y,Z)=0$ over ${\bf K}$ and let
$\frak p$ be a point of $C$. A  holomorphic map
\begin{align*}
\f=(\f_0,\f_1,\f_2) : \D_{\e} = \{t \in {\bf K} \mid |t| < \e\}
\to C,\quad \varphi(0) = {\frak p} \tag{3.1}\end{align*} is
referred to as a  {\it holomorphic parameterization of $C$ at
${\frak p}$}. Local holomorphic parameterization exists for
sufficiently small $\epsilon$. A rational function $Q$ on the
curve $C$ is represented by $A/B$ where $A$ and $B$ are
homogeneous polynomials in $X, Y, Z$ such that $B|_C$ is not
identically zero. Thus $Q \circ \phi$ is a well-defined
meromorphic function on $\D_{\e}$ with Laurent expansion
$$Q \circ \phi(t) = \sum_{i = m}^{\infty} a_i t^i, \qquad a_m
\ne 0.$$ The order of $Q \circ \phi$ at $t = 0$ is by definition
$m$ and shall be denoted by 
\begin{align*}\ord_{\frak p,
\f} Q = \ord_{t=0} Q(\f(t)).\tag{3.2}
\end{align*} The
function $Q \circ \phi$ is holomorphic if and only if $m \ge 0$.
The rational function $Q$ is regular at ${\frak p}$ if and only if
$Q \circ \phi$ is holomorphic for {\it all} local holomorphic
parameterizations of $C$ at ${\frak p}$.

Let $P(X)$ be a polynomial of degree $n$:
\begin{align*}P(X) = X^{n}+ a_{m} X^{m} + a_{m-1} X^{m-1} + \cdots
+ a_1X + a_0, a_m \ne 0 \end{align*} defined over ${\bf C}$. We
have $$P'(X)=n (X-\a_1)^{m_1}...(X-\a_l)^{m_l} $$ where  $\a_i\ne
\a_j$ for $i\ne j$ and $m_i\geq 1$. Thus $X = \a_i$ is a root of
order $m_i + 1$ of $P(X) - P(\a_i)$ hence:
\begin{align*}P(X) - P(\a_i) = \sum_{j=m_i+1}^n b_{i,j} (X -
\a_i)^{j}, \; b_{i,m_i+1} \ne 0, b_{i, n} \ne
0.\tag{3.3}\end{align*} A polynomial $P$ is said to {\it separate
the roots of $P'$} if
\begin{align*}P(\a_i)\ne P(\a_j) \text{ for all } 1 \le i\ne j \le
l.\tag{3.4}\end{align*} Let $F(X, Y, Z)$ be the homogenization of
the polynomial $$\frac{P(X) - P(Y)}{X - Y} = \sum_{k=1}^{n} a_k
\sum_{j=0}^{k-1} X^{k-1-j}Y^j$$ i.e., $ F(X, Y, Z) = Z^{n}
\{P(X/Z) - P(Y/Z)\}/(X -Y).$

\noindent{\bf Remark.}
$X-Y$ is not a factor of $F(X,Y,Z)$ since $F(X,X,1)=P'(X)\not\equiv 0.$
$Y-aZ$ or $X-aZ$, $a\in k$, is not a factor of $F(X,Y,Z)$ either since
$P(Y)\not\equiv P(a)$ and $P(X)\not\equiv P(a).$

For each $1 \le i \le l$ we may, by (3.3), express the
polynomial $F(X, Y, Z)$ as a polynomial in $(X-\a_iZ )$ and
$(Y-\a_i Z)$:
\begin{align*}
F(X,Y,Z) = \sum_{j=m_i+1}^n [b_{i,j} \frac{(X-\a_iZ )^{j}
-(Y-\a_iZ )^{j}}{X-Y}]Z^{n-j},\tag{3.5}\end{align*} $b_{i,m_i+1}
\ne 0, b_{i, n} \ne 0.$  It is clear that the points $(\a_i, \a_i,
1) \in C = [F(X,Y,Z)=0], 1 \le i \le l $. On the other hand, the
separation condition (3.4) implies that $(\a_i, \a_j, 1) \not\in
C$ if $i \ne j$. Denote by $P'(X,Z)=Z^{n-1}P'(X/Z)$ and
$P'(Y,Z)=Z^{n-1}P'(Y/Z)$ the homogenization of the polynomials
$P'(X)$ and $P'(Y)$ respectively, then
\begin{align*}P'(X,Z) = n\prod _{i=1}^l (X - \a_iZ),\;
 P'(Y,Z) = n\prod _{i=1}^l (Y -
\a_iZ).\tag{3.6}\end{align*} 
By the remark above it is clear that
$P'(X,Z)$ and $P'(Y, Z)$ are not identically zero on any component of $C$.
Differentiating the polynomial $F(X, Y, Z)$ yields:
\begin{align*}\begin{cases}
\cfrac{\partial F}{\partial X}(X,Y,Z)
=\cfrac{P'(X,Z) - F(X, Y, Z)} {X-Y},\\
\cfrac{\partial F}{\partial Y}(X,Y,Z) = \cfrac{-P'(Y,Z) + F(X, Y,
Z)}{X-Y}, \\ \cfrac{\partial F}{\partial Z}(X,Y,Z)
=(n-m)a_mZ^{n-m-1}\big(\sum_{i=0}^{m-1} X^{m-i}Y^i+ZH_{m-2}\big)
\end{cases}\tag{3.7}
\end{align*}
where $H_{m-2}(X,Y,Z)$ is a homogenous polynomial of degree $m-2$.
Let
\begin{align*}W(X,Y) =
\begin{vmatrix}X & Y \\ dX & dY
\end{vmatrix},\;
W(Y,Z) = \begin{vmatrix}Y & Z \\ dY & dZ\end{vmatrix},\; W(Z, X) =
\begin{vmatrix}Z & X \\ dZ & dX\end{vmatrix}
\end{align*}
be the Wronskians which are regular 1-forms on ${\bf C}^3$
then (see (2.2) or \cite{AWW}):
\begin{align*}
\g:=\frac{W(X,Y)}{\frac{\partial F}{\partial
Z}}=\frac{W(Y,Z)}{\frac{\partial F}{\partial X}} =
\frac{W(Z,X)}{\frac{\partial F}{\partial Y}}\tag{3.8}
\end{align*}
is a well-defined non-trivial rational 1-form on (any component
of) $\pi^{-1}(C)$ ($\pi : {\bf C}^3 \setminus \{0\} \to {\bf P}^2$
is the usual fibration and $C = [F(X,Y,Z)=0]$ is a curve in ${\bf
P}^2$). It is well-defined and non-trivial because, by (3.6) and
(3.8), the restriction of $\partial F/\partial X$ to $C$ is
\begin{align*}\frac{P'(X, Z)}{X-Y} = n \frac{\prod (X - \a_iZ)}{X-Y}.\tag{3.9}\end{align*}
 By the remark
after (3.4), $X - \a_iZ$, $X - Y$ and $W(Y, Z)$ are not identically zero
on any component of $C$. Moreover, for any homogeneous polynomials
$A(X, Y, Z)$ and $B(X, Y, Z)$ with $\deg B = \deg A + 2$, the
rational 1-forms
$$R(X, Y, Z)W(X,Y), R(X, Y, Z)W(Y, Z), R(X, Y, Z)W(Z, X)$$ 
with $R(X,Y,Z)=A(X,Y,Z)/B(X,Y,Z) $ are
globally well-defined on ${\bf P}^2$ (see section 2 or [2]).

\medskip
The next result provides sufficient conditions for the
hyperbolicity of the curve $C$ for a class of polynomials which
does not necessary satisfy the separation condition.

\medskip
\noindent {\bf Proposition 3.1.}\;{\it Let $P(X) = X^{n} +
a_mX^{m} + a_{m-1}X^{{m-1}}+ \cdots+ a_1X + a_0,\ a_m \ne 0 $, be
a polynomial of degree $n$. Assume that  the curve $C = [F(X, Y, Z)
= 0]$ has no linear component.  Then $C$ is algebraically
hyperbolic if $n - m \ge 3$ and is Brody hyperbolic if $n - m\ge
4$.}

\begin{proof}
By (3.7) the rational 1-form $\g$ defined by (3.8) satisfies the
condition
\begin{align*}
\g  &= \frac{(X-Y)W(X,Z)}{P'(Y,Z)}
\\&=\frac{(X-Y)W(Y,Z)}{P'(X,Z)} \tag{3.10}\\&=\frac{W(X,Y)}
{(n-m)a_mZ^{n-m-1}\big(X^{m-1}+X^{m-2}Y+\cdots+Y^{m}+ZH_{m-2}\big)}
\end{align*}
on $\p^{-1}(C)$ where $P'(X, Z)$ and $P'(Y, Z)$ are given by
(3.6). If $\g$ is trivial on an irreducible component of
$\p^{-1}(C)$ then $F(X, Y, 1)$ has a linear factor contradicting
the assumption that $F(X, Y, Z)$ has no linear factor. Thus
$\g$ is non-trivial on any component of $C$. Let
$L(X, Y, Z)$ be any linear form and 
$K(X, Y, Z) =
Z^{n-m-4}\big(X^{m-1}+X^{m-2}Y+\cdots+Y^{m-1}+ZH_{m-2}\big),$
then
the rational 1-form
\begin{align*}
\o := L(X, Y, Z) K(X, Y, Z)\g = \frac{L(X, Y, Z)} {(n-m)a_m Z^3}
W(X,Y) \tag{3.11}\end{align*}
is well-defined on ${\bf P}^2$
because the denominator (of the coefficient of $W(X, Y)$) is two
degrees higher than the numerator (see the remark before the
Proposition). For the same reason, the rational 1-form
\begin{align*}
\th := G(X, Y, Z)\g = \frac{1} {(n-m)a_m Z^2} W(X,Y)
\tag{3.12}\end{align*} (where $G(X, Y, Z) =
Z^{n-m-3}\big(X^{m-1}+X^{m-2}Y+\cdots+Y^{m-1}+ZH_{m-2}\big)$) is
well-defined on ${\bf P}^2$. It is clear that the pull-back of
$\o$ to the curve $C $ is non-trivial on each component of $C$. We
claim that it is also regular.
 From the definition (3.11)
it is clear that the only possible poles of $\o$ are the points
$[Z = 0] \cap C$. On the other hand, as a form on $C$, we see via
(3.10) that
\begin{align*}\o
&= \frac{(X-Y)L(X, Y, Z)K(X, Y, Z)W(Y,Z)}{P'(X,Z)}
\\&= \frac{(X-Y)L(X, Y, Z)K(X, Y, Z)W(X,Z)}{P'(Y,Z)}.\end{align*}
If $Z = 0$ then, since $n - m - 4 \ge 0$, the denominator $P'(X,
Z)$ in the expression above is reduced to $nX^{n-1}$ (resp.  $P'(Y,Z)$ is
reduced to $nY^{n-1}$). Thus, if $\o$ has a pole at a point with $Z = 0$
then we must have $X = Y = 0$ as well which, of course, is
impossible in ${\bf P}^2$. We conclude from this that $\o$ is a
regular $1$-form on $C$. Choosing $L(X, Y, Z) = X, Y$ and $Z$
respectively we obtain 3 regular 1-forms on $C$:
$$\frac{XW(X,Y)} {(n-m)a_m Z^3},\; \frac{YW(X,Y)} {(n-m)a_m Z^3},
\;\frac{W(X,Y)} {(n-m)a_m Z^2}$$ which are linearly independent on
each component of $C$. Thus the genus of each irreducible
component of $C$ is not less than 3. By Picard's theorem, this
shows that $C$ is Brody hyperbolic. Respectively, for the case $n
- m \ge 3$ we have to use $\th$ (as defined in (3.12)) which cannot
be further modified (except by constants) we get only one regular
1-form non-trivial on any component, hence we can only conclude
that the genus of each irreducible component of $C$ is not less
than one. This, however, still implies that $C$ is algebraically
hyperbolic as there is no non-constant algebraic map from ${\bf
C}$ into an elliptic curve.
\end{proof}

The condition that $C$ has no linear component is satisfied if we
assume that the zero set of $P$ is affine rigid (See  \cite{Wa}).  The
method in the proof of the preceding Proposition can also be used to
treat the case of a polynomial $P(X)$ satisfying the separation condition.
First we need a technical Lemma.

\medskip
\noindent {\bf Lemma 3.2.}\;{\it Let $\frak p_i=(\a_i,\a_i,1)$ and $\f$ be a
 local holomorphic parameterization  of $C= [F(X, Y, Z)
= 0]$ at $\frak p_i$. Then
$\ord_{\frak p_i, \f} (X - \a_i) =\ord_{\frak p_i, \f}
(Y - \a_i) \le\ord_{\frak p_i, \f} (X - Y).$}
\begin{proof}
From expression of the curve $C$ at $\frak p_i$, via (3.5), 
it is easy to see that
$\ord_{t=0} (X(\f(t)) - \a_i) =\ord_{t=0} (Y(\f(t)) -
\a_i) $ and
\begin{align*}{\rm ord}_{t=0} (X - Y)(\f(t)) 
&=\ord_{t=0} \{X(\f(t)) - \a_i - (Y(\f(t)) - \a_i)\} \\&\ge
\min\{\ord_{t=0}(X(\f(t)) - \a_i),\  \ord_{t=0} (Y(\f(t)) -\a_i)\}\\
&=\ord_{t=0}(X(\f(t)) - \a_i)
\end{align*}  as claimed.\end{proof}

\noindent {\bf Proposition 3.3.}\;{\it Let $P(X)$ be a polynomial
satisfying the separation condition $(3.4)$. Denote
$\{\a_i, 1 \le i \le l\}$ be the distinct roots of $P'(X)$ with
respective multiplicities $\{m_i, 1 \le i \le l\}$. Then
\begin{enumerate}
\item[(i)]
$ C$ is algebraically hyperbolic  if $l\geq 3$, or $l= 2$ and
$\min\{m_1,m_2\}\geq 2$;

\item[(ii)] 
$C$ is Brody hyperbolic if either of the following holds $(a)\, l\geq
4$,  $(b)\, l= 3$ and $ \max \{m_1, m_2, m_3\} > 1$,  $(c)\, l= 2,
\min\{m_1,m_2\}\geq 2$ and
$\max\{m_1,m_2\}\geq 3$. 
\end{enumerate}}
\begin{proof} By (3.10),
$$\g = \frac{(X-Y)W(X,Z)}{P'(Y,Z)} =\frac{(X-Y)W(Y,Z)}{P'(X,Z)}$$ on
$\p^{-1}(C)$ where $C = [F(X, Y, Z) = 0]$. Canceling out the
common factor $X - Y$, we get, via (3.6), the following
rational 1-form:
\begin{align*}\eta &= \frac{W(Y,Z)} {(X-\a_1Z)^{m_1}...
(X-\a_lZ)^{m_l}} = \frac{W(X,Z)}
{(Y-\a_1Z)^{m_1}...(Y-\a_lZ)^{m_l}}
\end{align*}
well-defined on $\pi^{-1}(C)$. Without loss of generality, we may
assume that $m_1\geq m_2\ge m_i,$ for $2\leq i\leq l$.  Suppose that
$l\geq 3$, or $l= 2$ and $\min\{m_1,m_2\}\geq 2$. In either cases
we have $n\ge 4$. The rational 1-forms:
\begin{align*}
\o_0&:=(X - Y)^{n-3}\eta=\frac{W(Y,Z)(X - Y)^{ n-3}}
{(X-\a_1Z)^{m_1}...(X-\a_lZ)^{m_l} },\\
\o_1&:=(X-\a_1Z)(X - Y)^{
n-4}\eta=\frac{W(Y,Z)(X -
Y)^{n-4}}{(X-\a_1Z)^{m_1-1}(X-\a_2Z)^{m_2}...(X-\a_lZ)^{m_l} },\\
\o_2& :=(X-\a_2Z)(X - Y)^{
n-4}\eta=\frac{W(Y,Z)(X -
Y)^{n-4}}{(X-\a_1Z)^{m_1}(X-\a_2Z)^{m_2-1}...(X-\a_lZ)^{m_l} }
\end{align*} 
are well-defined on the
curve $C$ (because each of the denominator of the coefficients of
$W(Y, Z)$ is of two degree higher than the numerator). By the
remark after (3.4) it is clear that $\o_i$ are
non-trivial   on any irreducible component
of $C$. Observe also that $\o_i$ does not have any pole along $[Z
= 0]$ (otherwise we have $X = Y = 0$ as well). On the finite part
of $C$ (i.e., $Z = 1$) the separation condition (3.4) and the
condition that $n \ge 4$ imply that the only possible poles of
$\o_i$ are the points $\frak p_j=(\a_j,\a_j, 1)$, $j=1, ..., l$.
Lemma 3.2 implies that, for any local parameterization $\f$ of $C$
at $\frak p_j, 1 \le j \le l$ ($\f(0) = \frak p_j$),
\begin{align*}
\ord_{\frak p_j, \f}\, \o_0 &= (n-3) \ord_{\frak p_j, \f}\, (X -
Y) - m_j \ord_{\frak p_j, \f}\, (X - \a_j)
\\&\geq(n-3-m_j) \ord_{\frak p_j, \f}\, (X-\a_j).
\end{align*}
 Since $m_1\geq m_j$ for $1\leq j\leq l$ and $m_1 + ... + m_l =
\deg P' = n - 1$, we infer that
\begin{align*}\ord_{\frak p_j, \f}\, \o_0 \geq(n-3-m_1)
\ord_{\frak p_j, \f}\, (X-\a_j) \ge \{(\sum_{i=2}^lm_i)-2 \}
\ord_{\frak p_j, \f}\, (X-\a_j)
\end{align*}
which is non-negative if $l\ge 3$ or $l= 2$ and
$\min\{m_1,m_2\}\geq 2$. This implies part (i) of the Proposition.

Similarly, we get via Lemma 3.2:
\begin{align*}\ord_{\frak p_1, \f}\, \o_1 &= (n-4) \ord_{\frak p_1, \f}\, (X -
Y) - (m_1 - 1) \ord_{\frak p_1, \f}\, (X - \a_1)
\\ &\geq (n-3-m_1) \ord_{\frak p_1,
\f}\,(X - \a_1) \\&\geq \{(\sum_{i=2}^lm_i)-2\} \ord_{\frak p_1,
\f}\, (X - \a_1)\ge 0\end{align*} 
provided that $l\ge 3$ or $l=
2$ and $\min\{m_1,m_2\}\geq 2$; and for  $2 \le j \le l$:
\begin{align*}\ord_{\frak p_j, \f}\,\o_1 &\geq (n-4-m_j)
\ord_{\frak p_j, \f}\, (X - \a_j) \\&\geq \{m_1 + (\sum_{2 \le
i\ne j \le l }^lm_i)-3\} \ord_{\frak p_j, \f}\, (X - \a_j)
\end{align*}
which is  non-negative if $l\ge 4$, or $l=3$ and $m_1\ge 2$, or
$l=2$ and $m_1\ge 3$. 
 Thus, under the hypothesis of (ii), $\o_1$
is also regular on $C$.

Similarly, we get:  
\begin{align*}
\ord_{\frak p_2, \f}\, \o_2 &= (n-4) \ord_{\frak p_2, \f}\, (X -
Y) - (m_2 - 1) \ord_{\frak p_2, \f}\, (X - \a_2)
\\ &\geq (n-3-m_2) \ord_{\frak p_2,
\f}\,(X - \a_2) \\&\geq \{(\sum_{i=3}^lm_i)+m_1-2\} \ord_{\frak p_2,
\f}\, (X - \a_2)\ge 0\end{align*} 
provided that $l\ge 3$ or $l=
2$ and $m_1\geq 2$;   
\begin{align*}
\ord_{\frak p_1, \f}\,\o_2 &\geq (n-4-m_1)
\ord_{\frak p_1, \f}\, (X - \a_1) \\
&\geq \{  (\sum_{i=2}^lm_i)-3\}
\ord_{\frak p_1, \f}\, (X - \a_1)
\end{align*}
which is  non-negative if $l\ge 4$, or $l=3$ and $m_2+m_3\geq 3$, or
$l=2$ and $m_2\ge 3$;
and if $l\ge 3$, for  $3\le j\le l$:
\begin{align*}
\ord_{\frak p_j, \f}\,\o_2 &\geq (n-4-m_j)
\ord_{\frak p_j, \f}\, (X - \a_j) \\&\geq \{m_1 + (\sum_{2 \le
i\ne j \le l }^lm_i)-3\} \ord_{\frak p_j, \f}\, (X - \a_j)
\end{align*}
which is  non-negative if $l\ge 4$, or $l=3$ and $m_1\ge 2$.
Thus, under the hypothesis of (ii), $\o_2$
is also regular on $C$ except when (a) $l=3$ and $m_2=m_3=1$; (b) $l=2$
and $m_2=2$.

If $l\ge 4$;   $l=3$, $m_1\ge 2$ and $m_2+m_3\ge 3$; or $l=2$ and $m_2\ge
3$, $\o_1$ and $\o_2$ are both regular and we claim that they  are linearly
independent on
$C$. For any constants $a$ and $b$, $$a \o_1 +
b \o_2 = (X - Y)^{ n-4}(a (X - \a_1Z) + b(X-\a_2Z))\eta ,$$  hence it is
not identically zero on any component of $C$ because $X-Y$ and $\a X+\b Z$
are not a linear factor of $F(X,Y,Z)$.

If $l=3$, $m_1\ge 2$ and $m_2+m_3=2$; or $l=2$ and $m_2=2$,
  $\o_0$ and $\o_1$ are both regular and we claim that they are linearly
independent on $C$. We note that under these assumptions, $n=m_1+3$ and
after a linear transformation we may assume that $\a_1=0$.  Hence
$P(X)$ can be written as
\begin{align*}
P(X)=P(0)+ b_0 X^{m_1+1}+b_1 X^{m_1+2}+X^{m_1+3}\tag{3.13}\end{align*}
with $b_0\ne 0$.  Moreover, if $b_1=0$, then $m_1$ is even; otherwise
$\pm \a_2$ with $\a_2^2=-b_0(m_1+1)/(m_1+3) $ are the other two
solutions of
$P'(X)=0$ and $P(\a_2)=P(-\a_2)$ which contradicts the separation
condition.
For any constants $a$ and $b$, 
$$a \o_0 +
b \o_1 = (X - Y)^{ n-4}(a (X - Y) + bX)\eta,$$
hence it is
not identically zero on any component of $C$ if
$a (X - Y) + bX$ is not a linear factor of $F(X,Y,Z)$.
Since $X-Y$ and  $X$ is not a factor of $F(X,Y,Z)$, we may assume
that $a=1$ and
$b\ne 0$.  Write 
$(X - Y) +bX=(b+1)X-Y.$
If it is a linear factor of $F(X,Y,Z)$, then  
$P(X)=P((b+1)X )  $, and by (3.13)
 we get, $(b+1)^{m_1+1} =(b+1)^{m_1+3}=1$  
and $(b+1)^{m_1+2} =1$ if
$b_1\ne 0$ which is always the case if $m_1$ is odd as noted above.
Therefore if $m_1$ is odd then $b=0$ since  it is the only solution to
satisfy $(b+1)^{m_1+1} =(b+1)^{m_1+2}=(b+1)^{m_1+3}=1$.
If $m_1$ is even,  $b=0$ is clearly also the only solution to
satisfy $(b+1)^{m_1+1} =(b+1)^{m_1+3}=1$. Hence we conclude that $b=0$
which leads to a contradiction. Thus we have constructed two regular
1-forms which are linearly independent on each component of $C$. This
implies that
$C$ is Brody hyperbolic.
\end{proof}

\section{the case $P(X) = cP(Y), c \neq 0, 1$}

In this section we shall deal with the curves $C_c =
[F_c(X,Y,Z)=0]$ where $F_c(X, Y, Z)$ is the homogenization of the
polynomial $P(X) - cP(Y)$, for $c \neq 0, 1$. As in the preceding
section let $P'(X) =\l(X - \a_1)^{m_1}...(X - \a_l)^{m_l}, m_i
> 0, \l \ne 0$ and  $\a_i \ne \a_j$, if $i \ne j$. 
We have, by direct calculation
\begin{align*}\begin{cases}
\cfrac{\partial F_c}{\partial X}(X,Y,Z) =  P'(X,Z)=\l
(X-\a_1Z)^{m_1}...(X-\a_lZ)^{m_l},\\
\cfrac{\partial F_c}{\partial Y}(X,Y,Z)
=-cP'(Y,Z)=- c\l(Y-\a_1Z)^{m_1}...(Y-\a_lZ)^{m_l},\\
\cfrac{\partial F_c}{\partial Z}(X,Y,Z)
=(n-m)a_mZ^{n-m-1}(X^{m}-cY^{m}+ZH_{m-1})
\end{cases} \tag{4.1} \end{align*}
on the curve $C_c = [F_c(X, Y, Z) = 0]$ and where $P'(X, Z), P'(Y,
Z)$ are as given in (3.6). We have the following analogue of
Proposition 3.1 (and with essentially the same proof).

\medskip
\noindent {\bf Proposition 4.1.}\;{\it Let
$P(X)=X^n+a_mX^m+\cdots+a_0, a_m\ne 0$ be a polynomial of degree
$n$. Suppose that the curve $C_c = [F_c(X, Y, Z) = 0]$ has no
linear component then $C_c$ is algebraically hyperbolic if $n - m
\ge 3$ and Brody hyperbolic  if $n - m\ge 4$.}

\begin{proof} As in Proposition 3.1,
we have
\begin{align*}
\g:=\frac{W(X,Y)}{\frac{\partial F_c}{\partial
Z}}=\frac{W(Y,Z)}{\frac{\partial F_c}{\partial X}} =
\frac{W(Z,X)}{\frac{\partial F_c}{\partial Y}} \tag{4.2}
\end{align*} on $\pi^{-1}(C_c)$.
By (4.1) we also have \begin{align*}
\g=\frac{W(X,Y)}{(n-m)a_mZ^{n-m-1}(X^{m}-cY^{m}+ZH_{m-1})
}=\frac{W(Y,Z)}{P'(X,Z)} = \frac{W(X,Z)}{-cP'(Y,Z)}.
\end{align*}
If $n - m \ge 3$, take $G(X, Y, Z) = Z^{n-m-3}
(X^{m}-cY^{m}+ZH_{m-1})$. Then
\begin{align*}
\th:&= G(X, Y, Z) \g =\frac{1}{a_m(n-m) Z^2} W(X, Y)
\end{align*}
is a well-defined rational 1-form on $C_c \subset {\bf P}^2$. By
construction the only possibility singularity of $\th$ is along $Z
= 0$. However, as in the proof of Proposition 3.1, we conclude
that a pole of $\th$ along $Z = 0$ implies that $X = Y = 0$ as
well. Obviously this is impossible hence $\th$ is regular on
$C_c$.

If $n - m \ge 4$, take $K(X, Y, Z) = Z^{n-m-4}
(X^{m}-cY^{m}+ZH_{m-1})$. Then as in the proof of Proposition 3.1,
for any linear form $L(X, Y, Z)$
\begin{align*}
\omega:&= L(X, Y, Z) K(X, Y, Z) \g =\frac{L(X, Y, Z)}{a_m(n-m)Z^3}
W(X, Y)
\end{align*}
is well-defined and regular on $C_c$. By taking $L(X, Y, Z) = X,
Y, Z$ respectively we get 3 regular 1-forms linearly independent
on each component of $C_c$
$$
\frac{XW(X,Y)}{a_m(n-m) Z^3} ,\ \frac{YW(X,Y)}{a_m(n-m) Z^3},
\frac{ W(X,Y)}{a_m(n-m) Z^2}.$$
 The Proposition follows immediately from this.
\end{proof}

We can say more if $P$ satisfies the separation condition. The
form $\g$ defined in (4.2) may be expressed, via (4.1), on
$\pi^{-1}(C_c)$ as:
\begin{align*}
\g =\frac{W(Y,Z)}{\l(X-\a_1Z)^{m_1}...(X-\a_lZ)^{m_l}} \equiv
\frac{W(X,Z)}{-c\l(Y-\a_1Z)^{m_1}...(Y-\a_lZ)^{m_l}} \tag{4.3}.
\end{align*}
As observed previously there is no pole along $[Z = 0] \cap
\pi^{-1}(C_c)$ hence the only possible poles with $Z \ne 0$ are
$\pi^{-1}(\a_i, \a_j,1)$ satisfying $P(\a_i)=cP(\a_{j})$ (this is
equivalent to the condition that $(\a_i, \a_j, 1) \in C_c$). The
separation condition (3.4) implies that there is at most one $j$
satisfying this condition. {\it From now on, we write $j =\t(i)$
if $(\a_i, \a_j, 1) \in C_c$}. Since $c \ne 0, 1$ we have
\begin{align*}
\t(i) \ne i \text { and } \t(i) \ne \t(j) \text { if } i \ne j.
\tag{4.4}\end{align*}
\medskip
We first establish a technique lemma which will be used through this
section.

\medskip
\noindent {\bf  Lemma 4.2.}\;{\it Let $P$ be a polynomial
satisfying $(3.4)$. Suppose that $P(\a_i)=cP(\a_{\t(i)})$. Let
$u\ge\max\{m_i,\  m_{\t(i)}\}$ and 
$H_j$,  $1\le j\le u$, be linear forms in $X,Y,Z$ such that  
$H_j(\a_i,\a_{\t(i)},1)=0$. Then
$$
\eta:=\frac{W(Y,Z)\prod_{j=1}^uH_j}{(X-\a_iZ)^{m_i}} 
$$ 
is  regular on $\pi^{-1}(C_c)$.}

\begin{proof} For simplicity of notation, assume that $i=1$ and
$\t(1)=l$; and let   $\frak p_1=(\a_1,\a_{l},1)$.
It suffices to check the regularity of $\eta$ along $\pi^{-1}(\frak p_1)$.
Since $H_j(\a_1,\a_{l},1)=0$,
$H_j=a(X-\a_1Z)+b(Y-\a_{l}Z)$ for some $a,b\in {\bf C}$.
Therefore,
$$
\ord_{\frak p_1,\f} H_j
\ge \min\{ \ord_{\frak p_1,\f}(X-\a_1Z), \ord_{\frak p_1,\f}(Y-\a_{l}Z)\},
$$
for any local parametrization $\f$ of $C_c$ at $\frak p_1$.
If  $\ord_{\frak p_1,\f}(X-\a_1Z)\le \ord_{\frak p_1,\f}(Y-\a_{l}Z)$, then
$\ord_{\frak p_1,\f} H_j=\ord_{\frak p_1,\f}(X-\a_1Z)$. Then it is clear that
$\eta$ is regular on
$\frak p_1$ since $u\ge m_1$.

If $\ord_{\frak p_1,\f}(X-\a_1Z)> \ord_{\frak p_1,\f}(Y-\a_{l}Z)$, then
$\ord_{\frak p_1,\f} H_j=\ord_{\frak p_1,\f}(Y-\a_{l}Z)$. By (4.3) we have
$$
\frac{W(Y,Z)(Y-\a_{l}Z)^u}{(X-\a_1Z)^{m_1}}\equiv
\frac{W(Z,X)(Y-\a_{l}Z)^{u-m_l}(X-\a_2Z)^{m_2}\dots(X-\a_lZ)^{m_l}}
{-c(Y-\a_2Z)^{m_2}...(Y-\a_{l-1}Z)^{m_{l-1}}}.
$$ Therefore,  $\eta $
 is regular on
$\pi^{-1}(\frak p_1)$ since
$u\ge m_l$.  
\end{proof}

\medskip

The following result (the case in which $\t(i)$ exists for all
$i$) was first established in \cite{Wa} and \cite{AJ} using the
truncated version of the Second Main Theorem of Nevanlinna Theory;
we include a simpler proof below.

\medskip
\noindent {\bf Lemma 4.3.}\;{\it Let $P$ be a polynomial
satisfying the separation condition $(3.4)$ and assume that for
each $1 \le i \le l$ there exists $\t(i)$ such that $(\a_i,
\a_{\t(i)}, 1) \in C_c = [F_c(X, Y, Z) = 0]$, i.e., $P(\a_i) = c
P(\a_{\t(i)})$ and $F_c$ is the homogenization of $P(X) - cP(Y), c
\ne 0, 1$. If, in addition, the curve $C_c $ has no linear
component then
\begin{enumerate}
\item[(i)]
$ C_c$ is algebraically hyperbolic if $|m_i-m_{\t(i)}|\geq
2$ for some $1 \le i \le l$;
\item[(ii)]
$ C_c$ is Brody hyperbolic if $|m_i-m_{\t(i)}|\geq 3$  for
some $1 \le i \le l$. 
\end{enumerate}}
\begin{proof}
The condition in (i) implies that there is some $i$ such that $m_i
\ge 3$. There is no loss of generality in assuming that $i = 1$.
The rational 1-form
\begin{align*}
\o:= \frac{(Y-\a_{\t(1)}Z)^{m_1-2}}{(X-\a_1Z)^{m_1}}  W(Y,Z) \tag{4.5}\end{align*}  is
well-defined on $C_c$. By (4.3) $\o$ may be expressed as:
\begin{align*}
\o&= \l
(Y-\a_{\t(1)}Z)^{m_1 -2}\prod_{i=2}^l (X - \a_iZ)^{m_i} \g\\
&=\frac{(Y-\a_{\t(1)}Z)^{m_1-m_{\t(1)} -2} \prod_{i=2}^l (X -
\a_iZ)^{m_i}}{\prod_{i=2}^l (Y-\a_{\t(i)}Z)^{m_{\t(i)}}} W(X,Z)\end{align*}
 where $\g$ is defined in (4.2). The first expression implies that the
only possible poles of $\o$ on $C_c$ are contained in $[X = \a_1] \cap
C_c$ while the second expression implies that the only possible poles of
$\o$ on $C_c$ are contained in $[Y = \a_{\t(i)}] \cap C_c, i \ge 2$,
provided that $m_1-m_{\t(1)}\geq 2$. This shows that $\o$ is regular on
$C_c$ because none of the points $\{(\a_1, \a_{\t(i)}, 1) \mid 2
\le i \le l\}$ is in $C_c$.  Furthermore, since none of the linear
functions $Y-\a_{\t(1)}Z,\, X - \a_{\t(j)}Z, j \ge 2$ is a factor of
$F_c$, we conclude that $\o$ is non-trivial on any component of
$C_c$. This establishes the first assertion if $m_1 - m_{\t(1)} \ge
2$. A similar argument applied to
$$\o = \frac{W(X,Z)(X-\a_{1}Z)^{m_{\t(1)}-2}}{(Y-
\a_{\t(1)}Z)^{m_{\t(1)}}} = -c\l(X-\a_{1}Z)^{m_{\t(1)}-2}
\prod_{i=2}^l (Y - \a_{\t(i)}Z)^{m_{\t(i)}} \g
$$
establishes the first assertion if  $m_{\t(1)}-m_1\geq 2$.

 For
assertion (ii) there is some $i$ such that $m_i \ge 4$. There is
no loss of generality in assuming that $i = 1$. If
$m_1-m_{\t(1)}\geq 3$ (resp. $m_{\t(1)} - m_1 \ge 3$) then an
argument similar to the one given above shows that
$$\o_1:=X\frac{W(Y,Z)(Y-\a_{\t(1)}Z)^{m_1-3}}{(X-\a_1Z)^{m_1}}
\;\text { and }\;
\o_2:=Y\frac{W(Y,Z)(Y-\a_{\t(1)}Z)^{m_1-3}}{(X-\a_1Z)^{m_1}} $$
(resp. take
$$\o_1:=X\frac{W(X,Z)(X-\a_{1}Z)^{m_{\t(1)}-3}}{(Y-
\a_{\t(1)}Z)^{m_{\t(1)}}} \; \text { and }\;
\o_2:=Y\frac{W(X,Z)(X-\a_{1}Z)^{m_{\t(1)}-3}}{(Y-
\a_{\t(1)}Z)^{m_{\t(1)}}}) $$ are well-defined regular 1-forms
non-trivial and linearly independent on every component of $C_c$.
\end{proof}

If the zero set of $P(X)$ is affine rigid then the condition that
$C_c$ has no linear component is satisfied for all $c \ne 0, 1$.

In what follows let $L_{ij}, 1 \le i \ne j \le l$, be the linear
form defined by
\begin{align*}
L_{ij} :=(Y-\a_{\t(j)}Z)-\frac{ \a_{\t(i)}-\a_{\t(j)}}{\a_i-\a_j}
(X-\a_j Z)\tag{4.6}
\end{align*}
provided that $\t(i)$ and $\t(j)$ exist, i.e., $P(\a_i) =
cP(\a_{\t(i)})$ and $P(\a_j) = cP(\a_{\t(j)})$. Observe that
$L_{ij}$ may also be expressed as
\begin{align*}L_{ij} =(Y-\a_{\t(i)}Z)-\frac{
\a_{\t(i)}-\a_{\t(j)}}{\a_i-\a_j} (X-\a_i Z).\tag{4.7}\end{align*}
Assuming that $C_c$ has no linear component then $L_{ij}$ is
not identically zero on any component of $C_c$. At each point
$\frak p_i = (\a_i, \a_{\t(i)}, 1) \in C_c$ and each local
parameterization at $\frak p_i$, we infer from (4.7) that
\begin{align*}
\ord_{\frak p_i, \f}  L_{ij} \ge \min\{ \ord_{\frak p_i, \f}
(X-\a_iZ), \ord_{\frak p_i, \f} (Y-\a_{\t(i)}Z)\},
\tag{4.8}\end{align*} 
and, analogously (from (4.6)) that for each
local parameterization $\f$ at $\frak p_j$
\begin{align*}
\ord_{\frak p_j, \f}  L_{ij} &\ge \min\{ \ord_{\frak p_j, \f}
(X-\a_jZ), \ord_{\frak p_j, \f} (Y-\a_{\t(j)}Z)\}.
\tag{4.9}\end{align*} By (3.3) we have the following expansion of
$P(X)$:
$$
P(X)=P(\a_i)+\sum_{j=m_i+1}^n b_{i,j} (X-\a_iZ)^j
$$
where $b_{i,m_i+1}\ne 0$ and $b_{i, n} \ne 0.$ If $\frak
p_i=(\a_i,\a_{\t(i)}, 1)\in C_c$, then $F_c(X,Y,Z)$ can be
expressed in terms of $X-\a_iZ$ and $Y-\a_{\t(i)}Z$ as (compare
(3.5))
$$
F_c(X,Y,Z)=\sum_{j=m_i+1}^n b_{i,j} (X-\a_iZ)^j-c
\sum_{j=m_{\t(i)}+1}^n b_{\t(i),j} (Y-\a_{\t(i)}Z)^j.
$$
Let $\f$ be a local parameterization of $C_c$ at $\frak p_i$, we
see from this expression that
\begin{align*}
(m_i+1)\,\ord_{\frak p_i, \f} (X-\a_iZ)= (m_{\t(i)}+1)\,
\ord_{\frak p_i, \f} (Y-\a_{\t(i)}Z). \tag{4.10}\end{align*} The
following Lemma is convenient in establishing the regularity of
certain rational forms to be constructed in the proof of
Proposition 4.7.

\medskip
\noindent {\bf Lemma 4.4.}\;{\it Assume that $L_{ij} ($see
$(4.6)), i \ne j,$ is defined. If $m_{\t(i)} \le m_i$ then for any
local parameterization $\f$ of $C_c$ at the point $\frak p_i =
(\a_i, \a_{\t(i)}, 1)$, we have $$ {\ord}_{\frak p_i, \f}
(X-\a_iZ) \le \ord_{\frak p_i, \f} (Y-\a_{\t(i)}Z) $$ and
$\ord_{\frak p_i, \f} L_{ij} = \ord_{\frak p_i, \f} (X-\a_iZ)$,
consequently, $\ord_{\frak p_i, \f} L_{ij}/(X-\a_iZ) \ge 0.$}
\begin{proof}
The assumption together with (4.10) imply that
$$\ord_{\frak p_i, \f} (X-\a_iZ) \le
\ord_{\frak p_i, \f} (Y-\a_{\t(i)}Z)
$$ and (4.8) implies that $\ord_{\frak p_i, \f} L_{ij}
= \ord_{\frak p_i, \f} (X-\a_iZ)$.
\end{proof}

\noindent {\bf Lemma 4.5.}\;{\it Let $P$ be a polynomial
satisfying the separation condition $(3.4)$. If $l\ge 2$ and the curve
$C_c$ has no linear component then 
\begin{enumerate}
\item[(i)] 
it is algebraically hyperbolic if
either of the following conditions holds,
\begin{enumerate}
\item[(a)] there exists an index $i_0$ such that $m_{i_0} \ge 2$
and $(\a_{i_0}, \a_j, 1) \not\in C_c$ for $1 \le j \le l;$
\item[(b)] there exist indices $i_1$ and $i_2$ such that $m_{i_1}
= m_{i_2} = 1$ and $(\a_{i_k}, \a_j, 1), \not\in C_c$ for $1 \le j
\le l$ and $k = 1, 2$;
\end{enumerate}
\item[(ii)]
it is Brody hyperbolic if
either of the following conditions holds,
\begin{enumerate}
\item[(a)]   there exists an index $i_0$ such that $m_{i_0}
\ge 3$ and $(\a_{i_0}, \a_j, 1) \not\in C_c$ for $1 \le j \le l;$
\item[(b)]  there exists indices $i_0$ and $i_1$  such that
$m_{i_1}\ge m_{i_0}=2$ and $(\a_{i_0}, \a_j, 1) \not\in C_c$ for $1
\le j \le l; $ 
\item[(c)]   there exist indices $i_1$ and $i_2$ such that
$m_{i_1}+m_{i_2}=3$ and $(\a_{i_k}, \a_j, 1), \not\in C_c$ for
$1 \le j\le l$ and $k = 1, 2$;
\item[(d)]  $l\ge 3$ and  there exist indices $i_1$ and $i_2$ such that
$m_{i_1}=m_{i_2}=1$ and $(\a_{i_k}, \a_j, 1), \not\in C_c$ for
$1 \le j\le l$ and $k = 1, 2$.
\end{enumerate} 
\end{enumerate}}

\begin{proof}
From (4.3) we see that if $(\a_{i_0}, \a_j, 1) \not\in C_c$ for all
$1\le j\le l$, then
$\g$ (as defined by (4.3)) is regular along $\pi^{-1}(C_c) \cap
[X-\a_{i_0}Z = 0]$. The rational $1$-form
\begin{align*}\eta = \frac{W(Y,Z)}{(X-\a_{i_0}Z)^{2}} =\l
(X-\a_{i_0}Z)^{m_{i_0}-2} \prod_{1 \le i \ne i_0 \le l} (X -
\a_iZ)^{m_i}  \g \end{align*} 
is well-defined on $C_c$ and,
as $m_{i_0} \ge 2$, it is also regular on $C_c$. This proves the
assertion (ia). Analogously, if (ib) is satisfied then $\g$ is
regular along $\pi^{-1}(C_c) \cap [X-\a_{i_1}Z = 0]$ and also along
$\pi^{-1}(C_c) \cap [X-\a_{i_2}Z = 0]$ hence the $1$-form
\begin{align*}\eta = \frac{W(Y,Z)}{(X-\a_{i_1}Z)(X-\a_{i_2}Z)} =\l\prod_{1 \le i \ne i_1, i_2 \le l} (X - \a_iZ)^{m_i}
\g \end{align*} is well-defined and regular on $C_c$. This
completes the proof of (ib).

Similarly,  if $(\a_{i_0}, \a_j, 1) \not\in C_c$ for all
$1\le j\le l$, and $m_{i_0}\ge 3$, then
\begin{align*}
\eta_1 &= \frac{XW(Y,Z)}{(X-\a_{i_0}Z)^{3}} =\l
X(X-\a_{i_0}Z)^{m_{i_0}-3} \prod_{1 \le i \ne i_0 \le l} (X -
\a_iZ)^{m_i} \g ,\\
\eta_2&=\frac{YW(Y,Z)}{(X-\a_{i_0}Z)^{3}} =\l Y(X-\a_{i_0}Z)^{m_{i_0}-3} \prod_{1 \le i \ne i_0 \le l} (X -
\a_iZ)^{m_i} \g 
\end{align*}
are two linearly independent  regular 1-forms on $C_c$. 
This proves the assertion (iia).
If  $m_{i_1}\ge m_{i_0}=2$, then there exists an index $i_k\ne i_0$ such
that $m_{i_k}=\max_{1\le i\le l} m_i.$
Suppose that $(\a_{i_0}, \a_j, 1) \not\in C_c$
for $1 \le j\le l$.  Let
$$\eta_1  = \frac{ W(Y,Z)}{(X-\a_{i_0}Z)^{2} };$$   
$$\eta_2 =\frac{ W(Y,Z)}{(X-\a_{i_0}Z)(X-\a_{i_k}Z)}
\quad\text{if }  (\a_{i_k}, \a_j, 1)  \not\in C_c\text{ for }  1 \le j\le
l ; 
$$ 
and
$$
\eta_2 =\frac{(Y-\a_{\t(i_k)} Z)W(Y,Z)}{(X-\a_{i_0}Z)^{2}(X-\a_{i_k}Z)}
\quad\text{if } (\a_{i_k}, \a_{\t(i_k)}, 1)   \in C_c.
$$
Then $\eta_1$ and $\eta_2$   are two linearly
independent  regular 1-forms on $C_c$.  We note the regularity of the
second $\eta_2$ is due to Lemma 4.4 since  $m_{i_k}=\max_{1\le i\le l}
m_i$ and    $\ord_{\frak p_j, \f}\, (X-\a_{i_k}Z)\le
\ord_{\frak p_j, \f}\, (Y-\a_{\t(i_k)} Z)$ from (4.10). This
completes the proof of (iib).
For (iic) we may assume that  $m_{i_1}=2$ and $m_{i_2}=1$ since
$m_{i_1}=3$ or $m_{i_2}=3$ is covered by (iia).
Then  
\begin{align*}
\eta_1 &= \frac{ W(Y,Z)}{(X-\a_{i_1}Z)^{2}},\\
\eta_2&=\frac{ W(Y,Z)}{(X-\a_{i_1}Z)(X-\a_{i_2}Z)}
\end{align*}
are two linearly independent  regular 1-forms on $C_c$.

If $m_{i_1}=m_{i_2}=1$ and $l\ge 3$, then there exists an index $i_t$
different from $i_1$ and $i_2$ such that $m_{i_t}=\max_{1\le i\le l} m_i.$
Then, similarly
$$
\eta_1  = \frac{ W(Y,Z)}{(X-\a_{i_1}Z)(X-\a_{i_2}Z) };
$$
$$
\eta_2 =\frac{ W(Y,Z)}{(X-\a_{i_1}Z)(X-\a_{i_t}Z)}\quad\text{if
}(\a_{i_t}, \a_j, 1), \not\in C_c \text{ for }1 \le j\le l;
$$
and
$$
\eta_2 =\frac{
(Y-\a_{\t(i_t)}Z)W(Y,Z)}{(X-\a_{i_1}Z)(X-\a_{i_2}Z)(X-\a_{i_t}Z)}
\quad\text{if } (\a_{i_t}, \a_{\t(i_t)}, 1)   \in C_c.
$$ 
  are two linearly independent  regular 1-forms on
$C_c$. This proves (iid).
\end{proof}

\medskip
\noindent {\bf Remark 4.6.}\; The preceding Lemma implies that
(under the assumption that the polynomial $P$ satisfies condition
(3.4)), in deciding whether $C_c$ is algebraically hyperbolic, we
may assume that for each $1 \le i \le l$ there exists another
index $\t(i)$ such that $(\a_i, \a_{\t(i)}, 1) \in C_c$ for all
but one index $i$ and, in which case, $m_i = 1$.

\medskip

\noindent {\bf Proposition 4.7.}\;{\it Let $P$ be a polynomial
of degree $n \ge 4$ satisfying the separation condition $(3.4)$
and assume that the curve $C_c = [F_c(X, Y, Z) = 0], c \ne 0, 1,$
has no linear component.  Rearrange $\a_i$ so that $m_1\ge
m_2\dots\ge m_l$.       Then
\begin{enumerate}
\item[(i)] $C_c$ is algebraically hyperbolic  if  {\rm (a)}  $l \ge 2$ and
$m_2\geq 2$, or {\rm (b)}  $l\ge 3$ and $m_2=1$ except when $l=3,
m_1=m_2=m_3=1$ with
$$
\frac{P(\a_1)}{P(\a_2)}=\frac{P(\a_2)}{P(\a_3)}
=\frac{P(\a_3)}{P(\a_1)}=c \text { or } \frac{1}{c};
$$

\item[(ii)] $C_c$ is Brody hyperbolic if  {\rm (a)}  $l\ge2$ and
$m_2\geq 2$ except when $l=2$ and $m_1=m_2=2$, or {\rm (b)} 
$l\geq 3$ and $m_2=1$ except when $l=3$ and $m_1=m_2=m_3=1$.
\end{enumerate}}

\begin{proof}  
 For case (ia) we have $m_1 \ge
m_2\geq 2$, and hence $m_1+m_2-2\ge m_1\ge m_i$, $i=1,...,l$. By Lemma
4.5 and Remark 4.6 we may assume that $\t(1)$ and $\t(2)$ exist
such that $\frak p_i = (\a_i, \a_{\t(i)}, 1) \in C_c$ for $i = 1,
2$. Thus $L_{12}$ is defined.  The rational 1-form
\begin{align*}
\o_1:= \frac{ L_{12}^{m_1+m_2-2}}{(X-\a_1Z)^{m_1}(X-\a_2Z)^{m_2}}
W (Y,Z) \tag{4.11}\end{align*} 
is well-defined (the denominator of
the coefficient of the Wronskian is two degree higher than the
numerator) on $C_c$. We claim that $\o_1$ is regular. It suffices
to check regularity at $\frak p_i, i = 1, 2$.  To check $\o_1$ is regular
at $\frak p_1$, it suffices to check the rational 1-form
$$
\eta=\frac{ L_{12}^{m_1+m_2-2}}{(X-\a_1Z)^{m_1}}
W (Y,Z)
$$
is regular at $\pi^{-1}(\frak p_1).$
Since 
 $m_1+m_2-2\ge m_1\ge m_i$, $i=1,...,l$, the later assertion is an
implication of  Lemma 4.2.  The regularity of $\o_1$ at $\frak p_2$ can
be checked similarly. Thus $\o_1$ is regular  on $C_c$
and (ia) of the Proposition is established.

Next we consider the case (iia).  Since $m_1\ge m_2\ge 2$, 
by Lemma 4.5 we may assume that $\t(1)$ and $\t(2)$ exist
such that $\frak p_i = (\a_i, \a_{\t(i)}, 1) \in C_c$ for $i = 1,
2$. Thus $L_{12}$ is defined.  Moreover, if $m_2=2$, then by Lemma 4.3
we only need to consider when $m_1\le 4$.
By the preceding case we already
have a regular 1-form $\o_1$, defined by (4.11), on $C_c$. We look
for another regular 1-form $\o_2$ on $C_c$ linearly independent to
$\o_1$.   If $m_2\ge 2$ and $m_1\ge 3$, the rational 1-form
\begin{align*}
\o_2:= \begin{cases}\cfrac{W (Y,Z)
L_{12}^{m_1+m_2-3}(X-\a_1Z)}{(X-\a_1Z)^{m_1 }(X-\a_2Z)^{m_2}}, &\text{if}
\quad m_2\ge 3;\\
\cfrac{W (Y,Z) L_{12}^{m_1-1}(X-\a_2Z)}{(X-\a_1Z)^{m_1}(X-\a_2Z)^2},
&\text{if}\quad m_2= 2,\text{ and }    3\le m_1 \le 4.\end{cases}
\end{align*}
by construction, is well-defined on $C_c$. Moreover, it is clear
that $\o_1$ and $\o_2$ are linearly independent on $C_c$ since $C_c$ has
no linear component. We claim that $\o_2$ is actually regular on $C_c$.

For the case $m_2 \ge 3$ we have $m_1+m_2-3 \ge m_1\ge m_i$ for all $i$.
Similar to the previous proof of the regularity of $\o_1$, we see that
$\o_2$ is regular on $C_c$ by  Lemma 4.2.  For the case $m_2=2$ and
$3\le m_1 \le 4$, in the numerator of $\o_2$  there are
$m_1(\ge m_i)$ linear forms vanishing at $\frak p_2$  which implies, by 
Lemma 4.2, that
$\o_2$ is regular at $\frak p_2$.  

We now  check the regularity of $\o_2$ at $\frak
p_1$.  When $m_2\ge 3$, $m_1+m_2-3\ge m_1\ge m_i$ and hence 
$\o_2$ is regular at $\frak p_1$ by Lemma 4.2.  We now consider when
$m_2=2$ and $m_1\ge 3$.  We first see that 
 by (4.10),
\begin{align*} (m_1 + 1)\ord_{\frak p_1, \f}(X-\a_1Z)  = (m_{\t(1)}+1)\ord_{\frak p_1,
\f}(Y-\a_{\t(1)}Z). \end{align*}
Since $m_2=2$, $m_{\t(1)}=2$ or 1.
Thus,   we infer that $\ord_{\frak
p_1, \f} L_{12} = \ord_{\frak p_1, \f}(X-\a_1Z)< \ord_{\frak
p_1,\f}(Y-\a_{\t(1)}Z).$ On the other hand,
as $Z \equiv 1$ on a neighborhood of $\frak p_1$,  
$$\ord_{\frak p_1, \f} W (Y,Z) =  \ord_{\frak p_1, \f} dY \ge
\ord_{\frak p_1, \f} (Y - \a_{\t(1)}Z) - 1.$$ 
Then
\begin{align*}
\ord_{\frak p_1, \f}\o_2&= \ord_{\frak p_1, \f} W (Y,Z) +
(m_1-1)\ord_{\frak p_1, \f}L_{12}
- m_1\ord_{\frak p_1, \f}(X-\a_1Z)\\
&\ge \ord_{\frak p_1, \f}(Y-\a_{\t(1)}Z)-1 -\ord_{\frak p_1, \f}
(X-\a_1Z)\\
&\ge \frac{m_1 +1}{m_{\t(1)}+1} \ord_{\frak p_1, \f}(X-\a_{\t(1)}Z)-1 -
\ord_{\frak p_1, \f} (X-\a_1Z)\\
&\ge \frac{m_1 -m_{\t(1)}}{m_{\t(1)}+1} \ord_{\frak p_1,
\f}(X-\a_1Z)-1.\tag{4.12}
\end{align*}
If $m_{\t(1)}=1$, then  $(m_1 -m_{\t(1)})/(m_{\t(1)}+1)\ge 1$ 
and thus (4.12) is non-negative.  
If $m_{\t(1)}=2$, then  $(m_1 -m_{\t(1)})/(m_{\t(1)}+1)= 1/3$ or $2/3.$ 
Since in this case, $(m_1 + 1)\ord_{\frak p_1, \f}(X-\a_1Z)  
= 3\ord_{\frak p_1,\f}(Y-\a_{\t(1)}Z)  $ with $m_1+1=4$ or 5, we infer
that $\ord_{\frak p_1,\f}(X-\a_1Z)\ge 3.$  Hence, (4.12) is also
non-negative. 
 This completes the proof
for this case.
Next we treat the case
$m_2 = 2, m_1 = 2$ and $l \ge 3$. Recall that we may assumed that
$L_{12}$ is defined (as $m_1 = m_2 = 2$). If there exists an index $\t(3)$
such that $(\a_3,
\a_{\t(3)}, 1) \in C_c$ hence $L_{23}$ and $L_{31}$ are defined.
The rational 1-form
$$\o_2 = \cfrac{W (Y,Z)
L_{12}L_{23}L_{31}}{(X-\a_1Z)^{2}(X-\a_2Z)^{2}(X-\a_3Z)}
$$
 is then defined.  We
have $m_i \le 2$ for all $i$, and for each $\frak p_i$, $i=1,2,3$, there
are two linear forms in the numerator of $\o_2$ vanishing at it. We infer
from Lemma 4.2 that $\o_2$ is regular.  To check that $\o_1$ and $\o_2$
are linearly independent is equivalent to show that the quadratic form $a
L_{12}(X-\a_3 Z)+bL_{23}L_{31}$, $a,b\in {\bf C}$ is not a factor of
$F_c$.  Note that we may assume this quadratic form is irreducible since
$F_c$ has no linear factor.  
We have shown for the case (ia) that
there is a regular 1-form in this case.  Therefore, $F_c$ cannot have any
quadratic factor, and hence $\o_1$ and $\o_2$ are linearly independent.  
If there does not exist
an index $\t(3)$ such that $(\a_3, \a_{\t(3)}, 1) \in C_c$ then we
take 
$$
\o_2 =\cfrac{W (Y,Z)
L_{12}^2 }{(X-\a_1Z)^2 (X-\a_2Z) (X-\a_3Z)}
$$   and it can be verified similarly via  Lemma 4.2 that $\o_1$ and $\o_2$
are regular and linearly independent on any component of $C_c$. 
This completes the proof for the case
$m_1 = m_2 = 2$ and $l \ge 3$.

It remains to deal with the case $m_2=1$ and $l \ge 3$ ((ib) and
(iib)). In this case we have $m_i = 1$ for all $2 \le i \le l$. We
separate the proof into two cases: (1) $m_1 \ge 2$ and (2) $m_1 =
1$.
First we treat the case $m_1\ge 2$. If there does not exist $\t(1)$
such that $(\a_1, \a_{\t(1)}, 1)$ $ \in C_c$ then we may take
$$
\o_1=\cfrac{W (Y,Z) }{(X-\a_1Z)^2  }
$$
which is a regular 1-form on $C_c$.
By Lemma 4.5, we may assume that for each $i\ge 2$
there exists an index $\t(i)$ such that $(\a_i,\a_{\t(i)},1)\in C_c.$
Since $l\ge 3$ and $m_i=1$ if $i\ge 2$,  $m_{\t(i)}=1$ for some $i\ge 2$.
Then we may take
$$
\o_2=\cfrac{W (Y,Z)
(Y-\a_{\t(i)}Z)  }{(X-\a_1Z)^2 (X-\a_iZ)  }
$$
which is a regular 1-form on $C_c$(by (4.10)) and linearly independent
to
$\o_1$. Thus we may assume that there exists $\t(1)$ such
that $\frak p_1=(\a_1, \a_{\t(1)}, 1) \in C_c$. 
Let $L$ be a linear form vanishing at $\frak p_1$.  We first claim that
\begin{align*}
\ord_{\frak p_1, \f}( W(Y,Z)L )\ge 2\ord_{\frak p_1, \f} (X-\a_1Z). 
\tag{4.13}
\end{align*}
 Since $m_i=1$ for $i\ge 2$, we have $m_{\t(1)}=1$, and
$$2 \ord_{\frak p_1, \f} (Y-\a_{\t(1)}Z) = (m_1+1) \ord_{\frak p_1, \f}
(X-\a_{1}Z)$$
where $m_1=2$ or $3$. Hence $\ord_{\frak p_1, \f} L  = \ord_{\frak
p_1, \f} (X - \a_1Z)$ and if $m_1=2$ then  $\ord_{\frak p_1, \f} (X-\a_{1}Z)
\ge 2$. Thus we
have:
\begin{align*}& \ord_{\frak p_1, \f}
W(Y, Z) + \ord_{\frak p_1, \f} L  - 2\ord_{\frak p_1, \f}
(X-\a_1Z) \\&\geq \ord_{\frak p_1, \f}(Y-\a_{\t(1)})- \ord_{\frak
p_1, \f}(X-\a_1)-1\\ & \ge \frac{m_1-1}{2} \ord_{\frak p_1, \f} (X-\a_1)-1 \ge
0.\end{align*}
 Next we claim if there exist one index $i_0\ge 2$ such that  
$(\a_{i_0}, \a_j, 1)\notin C_c$ for all $1\le j\le l$, then $C_c$ is Brody
hyperbolic.   By (4.13) 
\begin{align*}
\o_1&= \frac{W(Y,Z)(X-\a_1Z) }{(X-\a_1Z)^{2}(X-\a_{i_0}Z)},\\
\o_2&= \frac{W(Y,Z)(Y-\a_{\t(1)}Z) }{(X-\a_1Z)^{2}(X-\a_{i_0}Z)}
\end{align*}
are two regular, linearly independent 1-forms.
Therefore we may assume that for each $i$ there exists $\t(i)$ such
that $\frak p_i=(\a_i, \a_{\t(i)}, 1) \in C_c$.
Since $l\ge 3$, and $m_i=1$ for $i\ge 2$,  then
$m_{\t(i_0)}=1$ for some $i_0\ge 2$. Assume that $i_0=2$.
Then
$$\o_1:=  \frac{W (Y,Z) L_{12}}{(X-\a_1Z)^{2}(X-\a_{2}Z)}$$
  is regular at $\frak p_1$ by (4.13), and is regular at $\frak p_2$
by  Lemma 4.2.  Therefore,
$\o_1$ is regular on $C_c$.  Similarly,
$$\o_2:=  \frac{W (Y,Z)
L_{13}L_{23}}{(X-\a_1Z)^{2}(X-\a_{2}Z)(X-\a_3Z)}
$$
is regular at $\frak p_1$ by (4.13), and is regular at
$\frak p_2$ and $\frak p_3$ (since $m_i\le 2$, for $i\ge 1$) by  Lemma 4.2.  
This shows that
$\o_2$ is regular on $C_c$.  To show $\o_1$ and $\o_2$ are linearly
independent, one can use the previous argument that $F_c$ has no
quadratic factor as $\o_1$ exists.
Finally, we consider the case when there is no such index, i.e., for each
$i$ there exists an index $\t(i)$ such that $(\a_i,\a_{\t(i)},1)\in C_c$.
If $l\ge 4$,  then
\begin{align*}
&\o_1:=
\frac{W (Y,Z) L_{12}L_{34}}{(X-\a_1Z)(X-\a_2Z)(X-\a_3Z)(X-\a_4Z)},\\
&\o_2:=  \frac{W (Y,Z)
L_{13}L_{24}}{(X-\a_1Z)(X-\a_2Z)(X-\a_3Z)(X-\a_4Z)}
\end{align*}
are well-defined, linearly independent and regular on $C_c$, similarly.
If $l=3$, then
this case $L_{12}, L_{13}$ and $L_{23}$ are defined. There are
only two possibilities: (I) $\t(1) = 2, \t(2) = 3, \t(3) = 1$ or
(II) $\t(1) = 3, \t(3) = 2, \t(2) = 1$. For (I) we have
$$\frac{P(\a_1)}{P(\a_2)} = \frac{P(\a_2)}{P(\a_3)} =
\frac{P(\a_3)}{P(\a_1)} = c$$ and for (II)
$$\frac{P(\a_1)}{P(\a_3)} = \frac{P(\a_3)}{P(\a_2)} =
\frac{P(\a_2)}{P(\a_1)} = c.$$ This last identity is equivalent to
$$\frac{P(\a_1)}{P(\a_2)} = \frac{P(\a_2)}{P(\a_3)} =
\frac{P(\a_3)}{P(\a_1)} = \frac{1}{c}.$$
\end{proof}

\noindent {\bf Remark.}\; The conditions $P(\a_1)/P(\a_2) =
P(\a_2)/P(\a_3) = P(\a_3)/P(\a_1) = c$ imply that $P(\a_1) = c
P(\a_2) = c^2P(\a_3) = c^3P(\a_1)$ or $P(\a_1) = c^{-1} P(\a_2) =
c^{-2}P(\a_3) = c^{-3}P(\a_1)$ hence $c^3 = 1$ or $1/c^3 = 1$.
Analogously the condition that
$$\frac{P(\a_1)}{P(\a_2)} = \frac{P(\a_2)}{P(\a_3)} =
\frac{P(\a_3)}{P(\a_1)} = \frac{1}{c}.$$ imply that $1/c^2 + 1/c +
1 = 0$, i.e., $c$ and $1/c$ are the two solutions of the equation
$w^2 + w + 1$.

\section{Proof of the Results}

\begin{proof}[Proof of Theorem \ref{Nsep}]
From  Proposition 3.1 and Proposition 4.1, we see that the
assumption of affine rigidity on the set of zeros of $P(X)$ assures that
when $n-m\ge 4$ the regular 1-forms
$$
\frac{XW(X,Y)}{a_m(n-m) Z^3} ,\ \frac{YW(X,Y)}{a_m(n-m) Z^3},
\frac{ W(X,Y)}{a_m(n-m) Z^2}
$$
are not identically zero on any component of $C$ and $C_c$. 
Clearly, $X$ and $Y$ are not linear factors of $F(X,Y,Z)$ or
$F_c(X,Y,Z)$, and
$W(X,Y)$ only vanishes identically on linear components such as
$aX-bY=0.$  Therefore, it remains to show that $F(X,Y,Z)$ and
$F_c(X,Y,Z), c \ne 0, 1$ has no linear factor of the form $aX-bY$ if and
only if the greatest common divisor of the non-zero indices in $I $ is 1
and  the greatest common divisor of the non-zero indices in $J$ is also 1.

Let $I^{*} = I \setminus \{0\}$ and $J^{*} = J \setminus
\{0\}$.  Since 0 is divisible by all integers, the greatest common
divisor of $I$ (resp. $J$) equals  the greatest common
divisor of $I^{*}$ (resp. $J^{*}$).  Therefore we first assume that the
greatest common divisor of the non-zero indices in $I^{*}$ is 1 and  the
greatest common divisor of the non-zero indices in $J^{*}$ is also 1.
Recall from the remark of the theorem that if $n-m\ge 3$ and  the
greatest common divisor of the non-zero indices in $J$ is   1, then
$\#I\ge 3$.   Then, $\#I^{*}$ and
$\#J^{*}$ are at least 2. Under these assumptions, we   want to show
that 
$F(X,Y,Z)$ and $F_c(X,Y,Z)$ has no linear factor of the
form $aX-bY$.

Clearly, $F(X,Y,Z)$ and $F_c(X,Y,Z)$ having no linear factor of the
form $aX-bY$ is the same as saying $F(X,Y,1)$ and $F_c(X,Y,1)$ having
no linear factor of the form $aX-bY.$ It is also clear that,
neither $X$ nor $Y$ is a linear factor of $F(X, Y, 1)$ nor
$F_c(X,Y,1)$. Hence, we may assume that $a=1$ and $b\ne 0$.
Observe that, as $F_c(X,X,1)= (1-c) P(X) \equiv 0$, ($c \ne 1$) and
$F(X,X,1)=P'(X)\not\equiv 0$, $X-Y$ is not a factor of
$F_c(X,Y,1)$, nor $F(X, Y, 1)$. Hence $b\ne  1$.
The condition  that $X-bY$, $b\ne 1$, is a factor of  $F(X, Y, 1)$   
is equivalent to the condition that $F(bY, Y, 1) \equiv 0$. Since
$$
(X-Y) F(X, Y, 1)=P(X) - P(Y)=\sum_{i \in I^{*}} a_i(X^i-Y^i),
$$
$$
(bY-Y) F(bY, Y, 1)=\sum_{i \in I^{*}} a_i(b^i-1)Y^i\equiv 0.
$$
Hence, $b^i=1$  for all  $i\in I^{*}$.
Since $\#I^{*} \ge 2$ and the greatest common divisor of
indices in $ I^{*}$ is 1, 
we can find integers $n_i$,  $i\in I^{*}$ such that
$\sum_ {i\in I^{*}}i n_i =1$.  Therefore,
$b=\prod_ {i\in I^{*}} b^{in_i}=1$ which contradicts our assumption on
$b\ne 1$.

Suppose that $X-bY$ is a factor   of $F_c(X, Y, 1), c \ne 0, 1$, then
$P(bY)-cP(Y) \equiv 0$.  Therefore
$$
  \sum_{i \in I} a_i(b^i- c)Y^{i } \equiv 0. 
$$ 
This implies that
$b^i=c$ for all $i\in I$. 
Therefore
$b^{i-l}=1$ for all $i\in I$, where $l=\min \{i\ |\ i\in I \}$.
This is equivalent to saying that
$b^{j}=1$ for all $j\in J^{*}.$  Since 
$\#J^{*} \ge 2$ and the greatest common divisor of
indices in $ J^{*}$ is 1, similarly, we get  $b=1$ which is impossible.

Conversely, suppose that the greatest common divisor of the
indices in $I^{*}$ is $r>1$.  Then $i=rc_i$ for each $i\in I^{*}$. 
Then 
$$
F(X,Y,1)=\sum_{i\in I^{*}} a_i \frac{X^{rc_i}-Y^{rc_i}}{X-Y}.
$$
Clearly, $(X^r-Y^r)/X-Y$ is a factor of $F(X,Y,1)$.  In particularly, let 
$b$ be a primitive $r$-root of unity.  Then $X-bY$ is a linear factor of
$F(X,Y,1)$.

Similarly, suppose
that the greatest common divisor of the non-zero indices in $J^{*}$ is
$r>1$.  Then $i-l=\a_i r$  for all $i \in I$ and $i\ne l= \min \{i \mid i
\in I\}$.
Let $b$ be a primitive $r$-th root of unity, and take
$c=b^l$. We consider first that $b^l\ne 1$, i.e. $c\ne 1$.  Then for $i\in
I$ and
$i\ne l$, 
$b^{i-l} = b^{\a_ir}=1
$, and
$b^i=b^{i-l}b^l=c$.  Therefore,   
$$
P(X)-cP(Y)=\sum_{i\in I} a_i (X^i-cY^i) =\sum_{i\in I} a_i
(X^i-b^iY^i).
$$
Clearly, $X-bY$ is a linear factor of $F_c(X,Y,Z)$.

If $b^l=1$, i.e. $c=1$, then the same procedure shows that  $X-bY$ is a
linear factor of $P(X)-P(Y)$ which equals $(X-Y)F(X,Y,1)$.  Since $b\ne
1$, $X-bY$ is a linear factor of $F(X,Y,1)$. 
\end{proof}

\begin{proof}[Proof of Theorem \ref{Rational} and Theorem \ref{Mero}]
The sufficient conditions has been proved in Proposition 3.3 and
Proposition 4.7.   For the converse part, we only need to consider
when $l=1$; $l=2$ and $\min\{m_1,m_2\}=1$; $l=2$ and $m_1=m_2=2$; $l=3$
and $m_1=m_2=m_3=1$.
If $l=1$, then $P(X)=(X-\a_1)^n+a$, $n\ge 2$, $a\in {\bf C}$.  Therefore,
$P(X)$ is not a uniqueness polynomial for rational functions since
$P(f+\a_1)=P(\xi f+\a_1)$ for any  rational function $f$ and any $n$-th
roots of unity $\xi$.  Therefore, $P$ is not a uniqueness or strong
uniqueness polynomials for rational functions or meromorphic functions.

We recall from \cite{Wa} that if $P$ satisfies the separation condition,
then $\{ (\a_i,\a_i,1) |\, 1\le i\le l,  m_i\ge 2 \,\}   $  are the only
multiple points of $C$ and each  $(\a_i,\a_i,1)$ is ordinary and has
multiplicity $m_i$;  $C_c$ has at most $l$ multiple points
$\{ (\a_i,\a_{\t(i)},1) |\, 1\le i\le l,\ P(\a_i)=cP(\a_{\t(i)})  \,\}$
and each $ (\a_i,\a_{\t(i)},1)$ has multiplicity
$\min\{m_i,m_{\t(i)}\}+1  $ and is ordinary if $m_i=m_{\t(i)}. $

If $l=2$ and $\min\{m_1,m_2\}=1$, we may assume that $m_2=1$.  If
$m_1=1$, then the curve $C$ is smooth, and hence is irreducible.  If
$m_1\ge 2$, then
$F(X,Y,Z)=0$ has only one singular point $\frak q_1=(\a_1,\a_1,1)$ which
has multiplicity $m_1$.  We may assume that  $F(X,Y,Z)$ has no linear
factor, otherwise
$P$ is not a uniqueness polynomial for rational functions or meromorphic
functions.   If
$F(X,Y,Z)$ has a proper irreducible homogeneous factor
$H\in {\bf C}[X,Y,Z]$, then $F=HG$ for $G\in {\bf C}[X,Y,Z]$ and $\deg
H\ge 2$ by assumption.   Let $m_1^G( \le \deg G)$ and $m_1^H$
be the multiplicity   of $\frak q_1$ in 
$G=0$ and $H=0$ respectively.  We note that since $H$ is irreducible and
$\deg H\ge 2$,  $m_1^H< \deg H.$
 We have $m_1^G+m_1^H=m_1$ and $\deg G+\deg H=\deg F=n-1=m_1+1$. On the
other hand, by
 B\'ezout's  theorem, we have  
 $m_1^Hm_1^G=\deg H\deg G.$ Then $m_1^H=\deg H$ and $m_1^G=\deg G$ which
leads to a contradiction.  Therefore, $F$ is irreducible and has genus
zero by the genus formula.  This shows that $P$ is not a uniqueness
polynomials or strong uniqueness  for rational functions or meromorphic
functions.  

If $l=2$ and $m_1=m_2=2$, then $n=5$ and $C$ has two multiple points
$\frak q_1=(\a_1,\a_1,1)$  and $\frak q_2=(\a_2,\a_2,1)$ which
are ordinary and has multiplicity $2$.  One can check similarly via 
B\'ezout's  theorem that $C$ is irreducible.  By the genus formula we see
that the genus of $C$ is one.  Hence $P$ is not a uniqueness polynomial
or strong uniqueness polynomial for meromorphic functions.

If $l=3$ and $m_1=m_2=m_3=1$, then $n=4$ and $C$ is a smooth curve
(thus irreducible) of genus one. Hence $P$ is not a uniqueness polynomial
or strong uniqueness polynomial for meromorphic functions.

Finally, if $l=3$,  $m_1=m_2=m_3=1$, and $$
\frac{P(\a_1)}{P(\a_2)}=\frac{P(\a_2)}{P(\a_3)}=
\frac{P(\a_3)}{P(\a_1)}=w, 
$$
for $w$ satisfying  $w^2+w+1=0$.  
Then $F_w$ has 3 multiple points which are ordinary and each  has
multiplicity 2.
One can check similarly that $C_w$ is irreducible and has genus zero.
Therefore $P(X)$ is not a strong uniqueness polynomial for rational
functions.  This completes the proof. 
\end{proof}

\begin{proof}[Proof of Corollary \ref{$X^n+X^m$}] 
After a linear transformation, we may assume that $P(X)=X^n+aX^m+b$. 
If $\gcd(m,n)=d>1$ or $a=0$ then $P(X)=P(\xi_dX)$ where $\xi_d$ is a
$d$-primitive roots of unity.  Therefore, $P(X)$ is not a uniqueness
polynomial for rational functions or meromorphic functions in this case.  
If $n-m=1$, then $P'(X)=0$ has two distinct roots and the
non-zero root has multiplicity one in $P(X)$. Then by  Theorem
\ref{Rational}, $P(X)$ is not a uniqueness
polynomial for rational functions or meromorphic functions in this case.
If
$b=0$ and $n-m\ge 2$, then $P(\xi_{n-m}X)=\xi_{n-m}^mP(X)$ 
where $\xi_{n-m}$ is a
$(n-m)$-primitive roots of unity.  This implies that  $Y- \xi_{n-m}X$ is a
linear factor of $F_{ \xi_{n-m}^{-m}}(X,Y,Z).$ Therefore,
$P(X)$ is not a strong uniqueness polynomial for rational functions or
meromorphic functions if $b=0$ and $n-m\ge 2$.

From now, we assume that $a\ne 0$,
$\gcd(m, n)=1$,  and $n-m\ge 2$.
We first  claim that $P(X)$ satisfies the separation condition. 
Since
$$
P'(X)=nX^{m-1}(X^{n-m}+\frac{ma}n)
=nX^{m-1}\prod_{i=0}^{n-m-1}(X-\xi_{n-m}^i\a)
$$
where $\a$ satisfies 
$$
\a^{n-m}=\frac{-ma}n.
$$
Then $P(0)=b$ and 
\begin{align*}
P(\xi_{n-m}^i\a)&= (\xi_{n-m}^i\a )^m (\a^{n-m}+a)+b\\
&= \frac{(n-m)a\a^m}n \xi_{n-m}^{im}   +b.  
\end{align*}
Clearly, $P(0)\ne P(\xi_{n-m}^i\a)$.  Since $\gcd(n,m)=1$, $m$ is also
relatively prime to $n-m$.  Therefore, $\xi_{n-m}^m$ is also a
$(n-m)$ primitive root of unity and hence $\xi_{n-m}^{mi}\ne
\xi_{n-m}^{mj}$ if $0\le i\ne j\le n-m-1$.  Therefore,
$P(\xi_{n-m}^i\a)\ne P(\xi_{n-m}^j\a)$ if $0\le i\ne j\le m-n-1$.   This
concludes that $P(X)$ satisfies the separation condition.
Secondly, when $b\ne 0$ we claim  the zero set of $P(X)$ is affine rigid, equivalently,
$F(X,Y,Z)$ and each $F_c(X,Y,Z)$, $c\ne 0,1$ have no linear factor.
It is clear that   $F(X,Y,Z) $ is
irreducible if $\gcd(n, m)=1$.   Suppose that $\n X-\l Y-\m Z$ is a factor
of $F_c(X,Y,Z)$.  It's clear that   $F_c(X,Y,Z)$ has no factor of the
type  $\l Y+\m Z$ or $\n X-\m Z$.  Therefore, we assume that
$\n=1$ and $\l\ne 0$.  Then
\begin{align*}
0&\equiv P(\l Y+\m)-cP(Y)\\
&=(\l Y+\m Z)^n-cY^n+a[ (\l Y+\m
Z)^m-cY^m]+b(1-c). 
\end{align*} 
If $\m=0$, then it implies $b(1-c)=0$ which is impossible if $b\ne 0$.
If $\m\ne 0$, since $n-m\ge 2$, comparing the terms of degree $n-1$ in the
equation implies that $\l=0$ which is impossible.  Therefore, $S$ is
affinely rigid under the assumption. 
Thirdly, since $P'(X)$ has $n-1$ distinct zeros if $m=1$; or  $n-m+1$
zeros, one with multiplicity
$m-1$ and the others has multiplicity one if $m\ge 1$.  By Theorem
\ref{Rational} it is a
 uniqueness polynomial  for rational functions if and only if
$n\ge 4$; and  by  Theorem \ref{Mero} is a (strong)
uniqueness polynomial  for meromorphic functions if and only if $n\ge 5.$
This completes the proof of (i), (iii) and (iv). For the same reason, 
$P(X)$ is a strong uniqueness polynomial for rational functions if $n\ge
5$.  It is now remains to check the case when $n=4$ in more details.  
Since $n-m\ge 2$ and $\gcd(n,m)=1$, we only need to consider when $n=4$
  and $m=1$.  Then $P(X)=X^4+aX+b$  and 
$$P'(X)=4X^3+aX=4(X-\a)(X-w\a)(X-w^2\a)$$
with $\a^3=-a/4$ and  $w^2+w+1=0$. 
From Theorem \ref{Rational}, $P(X)$ is not a strong
uniqueness  polynomials for rational functions in this case if and only
if there is a permutation $\t$ of $\{1,2,3\}$ with $\t(i)\ne i$ such
that  
\begin{align*}
\frac{P(\a_1)}{P(\a_{\t(1)})}=\frac{P(\a_2)}{P(\a_{\t(2)})}
=\frac{P(\a_3)}{P(\a_{\t(3)})}=w \tag{5.1}
\end{align*}
where $\a_1,\,\a_2,\,\a_3$ are solutions of $P'(X)$.
For the first one, we have
 \begin{align*}
&P(\a)=\frac34a\a+b,\quad P(w\a)=\frac34aw\a+b,\quad
P(w^2\a)=\frac34aw^2\a+b.
\end{align*}
Since $b\ne 0$, it is easy to see that $P(\a)\ne w P(w\a)$ and $P(\a)=w
P(w^2\a)$.  Therefore it is impossible for $P(X)$ to satisfy (5.1).
Hence, $P(X)$ is a strong uniqueness polynomial for rational functions
in this case.  This concludes the proof for (ii).
\end{proof}

\end{document}